\documentclass[11pt]{amsart}
\usepackage{pstricks,pst-plot}
\usepackage{tikz}
\usetikzlibrary{patterns}

\definecolor{Black}{cmyk}{0,0,0,1}
\definecolor{OrangeRed}{cmyk}{0,0.6,1,0} 
\definecolor{DarkBlue}{cmyk}{1,1,0,0.20}
\definecolor{myblue}{rgb}{0.66,0.78,1.00}
\definecolor{Violet}{cmyk}{0.79,0.88,0,0}
\definecolor{Lavender}{cmyk}{0,0.48,0,0}

\parskip=\smallskipamount

\newtheorem{theorem}{Theorem}[section]
\newtheorem{lemma}[theorem]{Lemma}
\newtheorem{corollary}[theorem]{Corollary}
\newtheorem{proposition}[theorem]{Proposition}

\theoremstyle{definition}
\newtheorem{definition}[theorem]{Definition}
\newtheorem{example}[theorem]{Example}

\newtheorem{remark}[theorem]{Remark}
\newtheorem{question}[theorem]{Question}

\newcommand{\C}{\mathbb{C}}

\newcommand{\R}{\mathbb{R}}

\newcommand\Subset{\subset\subset}

\newcommand{\bea}{\begin{eqnarray*}}
\newcommand{\eea}{\end{eqnarray*}}

\numberwithin{equation}{section}

\begin{document}

\title[Weighted-$L^2$ polynomial approximation ]{ Weighted-$L^2$ polynomial approximation in $\mathbb{C}$}

\author{S\'everine Biard}
\author{John Erik Forn\ae ss}
\author{Jujie Wu}

\address{ {
$^{*}$ Corresponding author  \\
Jujie Wu} \
\\{\it E-mail address:}  jujie.wu@ntnu.no
\\{ School of Mathematics and statistics, Henan University}\\{ Jinming Campus of Henan University, Jinming District, City of Kaifeng, Henan Province. P. R. China, 475001}   \\ \ \ \ \  \ \ \ \ \ \ \ \ \ \ \ \ \ \ \ \ \ \ \ \ \ \ \ \ \ \ \  \ \ \ \ \ \ \  \ \ \ \ \ \ \  \ \ \ \ \ \ \   \ \ \    \ \ \ \ \ \ \ \ \ \  \ \ \ \ \ \ \ \ \ \  \ \ \ \ \ \ \ \ \ \ \ \ \ \ \ \ \ \ \ \ \ \ \ \ \ \ \ \ \ \ \ \  \ \ \ \ \ \ \ \ \ \ \ \ \ \ \ \ \ \ \  \ \ \ \ \ \ \ \ \ \ \ \ \ \ \ \ \ \ \ \ \ \ \ \ \ \ \ \ \ 
 \  \ \ \  \ \ \ \ \ \ \ \  \ \ \ \ \ \ \ \ \ \ \ \ \    \\{Department of Mathematical Sciences, NTNU}\\{ Sentralbygg 2, Alfred Getz vei 1, 7034 Trondheim, Norway} }

\address{{S\'everine Biard}
\\{\it E-mail address:} biard@hi.is
\\{Science Institute, University of Iceland}\\{Dunhagi 3, IS-107 Reykjavik, Iceland}}

\address{{John Erik Forn\ae ss}
\\{\it E-mail address:} john.fornass@ntnu.no
\\{ Department of Mathematical Sciences, NTNU}\\{ Sentralbygg 2, Alfred Getz vei 1, 7034 Trondheim, Norway}}

\date{}
\maketitle

\bigskip

\begin{abstract}
We study the density of polynomials in $H^2(\Omega,e^{-\varphi})$, the space of square integrable holomorphic functions in a bounded domain $\Omega$ in $\mathbb{C}$, where $\varphi$ is a subharmonic function. In particular, we prove that the density holds in Carath\'{e}odory domains for any subharmonic function $\varphi$ in a neighborhood of $\overline{\Omega}$. In non-Carath\'{e}odory domains, we prove that the density depends on the weight function, giving examples.

\bigskip

\noindent{{\sc Mathematics Subject Classification} (2010):  30D15, 30E10, 32A10, 32E30, 32W05 }

\smallskip

\noindent{{\sc Keywords}: Carath\'{e}odory domain, weighted $L^2$-estimates, polynomial approximation, moon-shaped domain   }

\end{abstract}

\tableofcontents

\section{Introduction}
Let $\Omega$ be a domain in $\mathbb{C}$.
We denote by $L^2(\Omega, e^{-\varphi})$ the space of measurable functions $f$ such that 
$$\|f\|_{\Omega,\varphi}^2 : = \int_{\Omega } |f|^2 e^{-\varphi}d\lambda < + \infty,$$
where $\varphi$ is  a measurable function on $\Omega$, and $d\lambda$ is the Lebesgue measure. Let $H^2(\Omega, e^{-\varphi})$ (resp. $H^2(\overline{\Omega}, e^{-\varphi})$\,) be the space of  holomorphic functions on a domain $\Omega$ (resp. holomorphic functions on a neighborhood of $\overline{\Omega}$), which are in $L^2(\Omega, e^{-\varphi})$, that is
$$
H^2(\Omega,  e^{-\varphi}):=\mathcal{O}(\Omega)\cap L^2(\Omega, e^{-\varphi}).
$$

Recall that a Carath\'{e}odory domain $\Omega$ is a simply-connected bounded planar domain whose boundary $\partial{\Omega}$ is also the boundary of an unbounded domain. An unbounded domain $\Omega$ is said to be Carath\'{e}odory if its image under the map $z\mapsto (z-z^0)^{-1}$ ($z^0$ being a fixed point in ${\mathbb C}\backslash \overline{\Omega}$) is Carath\'{e}odory. Every Jordan domain is a Carath\'{e}odory domain. The domains, for example, of a snake winding infinitely often around the outside of a circle and approaching this circle (``outer snake'') are Carath\'{e}odory, but not a snake winding infinitely often inside a circle and approaching it from the inside (``inner snake"). For more relavant references about Carath\'{e}odory domain, please see \cite{Gaier80} on page 17.  

Questions of completeness for polynomials were first studied by T. Carleman \cite{Carleman1923} in 1923 who proved that if $\Omega$ is a Jordan domain and $ \varphi \equiv 0$, then every $L^2$ holomorphic function on $\Omega$ can be approximated  by polynomials in $L^2(\Omega, 1)$, and this result was later extended by Farrell \cite{Farrell34} and Markushevitch \cite{Mark34} independently to Carath\'{e}odory domains. For more general non-Carath\'{e}odory domains  it is well known that this property need not hold. In \cite{Hedberg65}, Hedberg proved that if $\Omega$ is a Carath\'{e}odory domain, the weight $e^{-\varphi}$ is continuous and satisfies some 
conditions then polynomials are dense in $H^2(\Omega, e^{-\varphi})$. For non-Carath\'{e}odory domains, the weighted approximation is usually considered when the weight $e^{-\varphi}$ is essentially bounded and satisfies additional conditions (see \cite{Brennan77}). Based on H\"ormander's $L^2$-estimates for the $\bar{\partial}-$operator, Taylor \cite{B.A.Taylor1971} proved that polynomials are dense in $H^2(\mathbb{C}^n, e^{-\varphi} )$ when $\varphi$ is convex, which allows the weight to have singularity and can be seen as a major breakthrough for weighted $L^2$ approximation (see also \cite{D.Wohlgelernter75}).  Sibony \cite{N.Sibony76} generalized Taylor's result and obtained that if $\varphi$ is plurisubharmonic (psh) on $\C^n$ and complex homogeneous of order $\rho >0$, i.e,  $\varphi(uz)=\vert u\vert^\rho\varphi(z)$ for $u\in\mathbb{C}$, $z\in\mathbb{C}^n$ then polynomials are dense in $H^2(\mathbb{C}^n, e^{-\varphi} )$ (see also \cite{FFW}, section 8). It is well known that each convex function is psh, but the converse is not true. Thus it is natural to ask 
\begin{question} \label{question}
Can we generalize Taylor's result to any psh function or can we find some non-convex psh function $\varphi$ on $\Omega \subset \mathbb{C}^n$ satisfying the property that $H^2(\Omega, e^{-\varphi} )$ contains all the polynomials but polynomials are not dense in it?
\end{question} 
Our first result is
\begin{proposition} \label{th:holomorphic approximation}
Let $\Omega=\bigcap^{N}_{\nu=0} G_\nu$ be a bounded domain in ${\mathbb C}$ where $G_0$ is a bounded Carath\'{e}odory domain and $G_\nu,\, 1 \leq \nu \leq N$, are unbounded Carath\'{e}odory domains. If $\varphi$ is a subharmonic function on $\overline{\Omega}$, i.e. in a neighborhood $V$ of $\overline{\Omega}$, then $H^2(\overline{\Omega}, e^{-\varphi})$ is dense in $H^2(\Omega, e^{-\varphi})$.
\end{proposition}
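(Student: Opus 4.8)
The plan is to reduce the proposition to the case of a single \emph{simply-connected} Carath\'eodory domain, where the defining exterior-approximation property can be exploited directly, and then to reassemble the pieces by a Cauchy-type splitting. Note first that each building block is simply connected: $G_0$ is a bounded Carath\'eodory domain, while each $G_\nu$, $1\le\nu\le N$, is unbounded Carath\'eodory, hence simply connected on the Riemann sphere (its complement being a compact connected set). The domain $\Omega$ itself is, however, typically multiply connected, and one cannot hope to find a single holomorphic map of a neighborhood of $\overline\Omega$ into $\Omega$ close to the identity: an essential holomorphic map between annular regions cannot increase the conformal modulus, so no such map exists once $\Omega$ has nontrivial topology. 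This is exactly why $f$ must first be split into pieces adapted to the individual boundary components.

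So, fix $f\in H^2(\Omega,e^{-\varphi})$ and a cycle $\Gamma=\sum_{\nu=0}^N\Gamma_\nu\Subset\Omega$ of smooth curves (appropriately oriented), each $\Gamma_\nu$ running parallel to and close to the portion of $\partial\Omega$ lying on $\partial G_\nu$. Using the Cauchy kernel one writes $f=\sum_{\nu=0}^N f_\nu$ on the region bounded by $\Gamma$, where $f_\nu(z)=\frac{1}{2\pi i}\int_{\Gamma_\nu}\frac{f(\zeta)}{\zeta-z}\,d\zeta$ is holomorphic on the $G_\nu$-side of $\Gamma_\nu$, hence on all of $G_\nu$ after letting $\Gamma_\nu\to\partial\Omega$. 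The first technical point is to carry this splitting out with weighted-$L^2$ control, i.e. to produce $f_\nu\in\cO(G_\nu)$ with $\sum_\nu\|f_\nu\|_{\Omega,\varphi}\le C\|f\|_{\Omega,\varphi}$ and $\sum_\nu f_\nu=f$ on $\Omega$; since the contours lie in $\Omega$, where $f$ is holomorphic and smooth, this is a careful limiting argument as the contours approach $\partial\Omega$.

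Next I treat a single simply-connected Carath\'eodory domain $G$ (either $G_0$, or, after the inversion $z\mapsto(z-z^0)^{-1}$ of the definition, one of the $G_\nu$). Being Carath\'eodory, $G$ is the Carath\'eodory kernel of a decreasing sequence of Jordan domains $G^{(m)}\supset\overline G$, so by Carath\'eodory's kernel theorem the normalized Riemann maps $\phi_m\colon\mathbb{D}\to G^{(m)}$ converge locally uniformly to the Riemann map $\phi\colon\mathbb{D}\to G$. Then $\Psi_m:=\phi\circ\phi_m^{-1}$ is holomorphic on the neighborhood $G^{(m)}$ of $\overline G$, maps $\overline G\subset G^{(m)}$ into $G=\Psi_m(G^{(m)})$, and tends to the identity uniformly on $\overline G$. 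Consequently $f_\nu\circ\Psi_m$ is holomorphic on a neighborhood of $\overline{G_\nu}$ (in particular of $\overline\Omega$) and bounded there, and it remains to show $f_\nu\circ\Psi_m\to f_\nu$ in $H^2(\Omega,e^{-\varphi})$ as $m\to\infty$.

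The convergence in the previous step is the main obstacle, because $e^{-\varphi}$ may be unbounded where $\varphi=-\infty$, so a crude change of variables does not bound $\int_\Omega|f_\nu\circ\Psi_m|^2e^{-\varphi}$ by $\int_\Omega|f_\nu|^2e^{-\varphi}$. Here the hypothesis that $\varphi$ is subharmonic on a full neighborhood $V$ of $\overline\Omega$ is essential. I would first regularize, replacing $\varphi$ by smooth subharmonic mollifications $\varphi_\e\downarrow\varphi$ on a slightly smaller neighborhood, for which $e^{-\varphi_\e}$ is bounded on $\overline\Omega$ and the composition operators $g\mapsto g\circ\Psi_m$ are uniformly bounded and converge strongly; monotone convergence then transfers the statement back to $\varphi$. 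To bound these operators uniformly I would use the sub-mean value property of the subharmonic function $|f_\nu|^2$, estimating $|f_\nu(\Psi_m(z))|^2$ by its average over a small disk contained in $\Omega$ and integrating against the weight, the uniform gap between $\Psi_m(\overline\Omega)$ and $\partial\Omega$ supplying the radius. Summing over $\nu$ then yields $g^{(m)}:=\sum_{\nu=0}^N f_\nu\circ\Psi_m\in H^2(\overline\Omega,e^{-\varphi})$ with $g^{(m)}\to f$ in $H^2(\Omega,e^{-\varphi})$, which is the assertion.
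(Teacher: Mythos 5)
Your strategy (a Laurent--Cauchy splitting of $f$ into pieces $f_\nu$ attached to the boundary components, followed by composition with the almost-identity conformal maps $\Psi_m=\phi\circ\phi_m^{-1}$) is the classical Carleman--Farrell--Markushevich route, and it is genuinely different from the paper's: the paper never composes $f$ with a map. Instead it builds Jordan domains $\Omega_n\supset\overline\Omega$ carrying negative continuous subharmonic functions $\rho_n$ whose sublevel sets lie in $\Omega$ (Lemma 2.3), multiplies $f$ by a cutoff supported in such a sublevel set, and corrects $f\eta$ by solving $\bar\partial u=f\bar\partial\eta$ on $\Omega_n$ via the Donnelly--Fefferman estimate, which leaves the weight $e^{-\varphi}$ completely untouched. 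That choice is not cosmetic; it is exactly what allows singular weights.

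The genuine gap in your version is the step you yourself call the main obstacle: $f_\nu\circ\Psi_m\to f_\nu$ in $H^2(\Omega,e^{-\varphi})$. The change of variables gives $\int_{G^{(m)}}|f_\nu\circ\Psi_m|^2\,|\Psi_m'|^2e^{-\varphi\circ\Psi_m}\,d\lambda=\int_{G}|f_\nu|^2e^{-\varphi}\,d\lambda$, i.e.\ the natural identity carries the \emph{wrong} weight $e^{-\varphi\circ\Psi_m}$, and there is no pointwise comparison between $e^{-\varphi}$ and $e^{-\varphi\circ\Psi_m}$ for a general subharmonic $\varphi$, because $\Psi_m$ moves the singular set of $\varphi$. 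Your proposed repair does not close this. Mollifying to $\varphi_\e\downarrow\varphi$ gives convergence for each fixed $\e$, but since $e^{-\varphi_\e}\uparrow e^{-\varphi}$, returning to $\varphi$ requires uniform integrability of $|f_\nu\circ\Psi_m-f_\nu|^2e^{-\varphi}$ near the singular set, uniformly in $m$ --- which is the original problem restated. The sub-mean-value bound $|f_\nu(\Psi_m(z))|^2\le (\pi r^2)^{-1}\int_{B(\Psi_m(z),r)}|f_\nu|^2\,d\lambda$ followed by Fubini controls $\int_\Omega|f_\nu\circ\Psi_m|^2e^{-\varphi}$ only by $\int|f_\nu(w)|^2\bigl(\text{local average of }e^{-\varphi}\text{ near }w\bigr)\,d\lambda(w)$, and the averaged weight is not dominated by $e^{-\varphi(w)}$ where $\varphi\to-\infty$; this is precisely the regime the proposition is meant to cover (for continuous bounded weights the statement is essentially Hedberg's). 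A secondary gap: the claimed bound $\sum_\nu\|f_\nu\|_{\Omega,\varphi}\le C\|f\|_{\Omega,\varphi}$ fails whenever $e^{-\varphi}$ is not locally integrable at some point of $\overline\Omega$ (Lelong number $\ge 2$): then $f$ must vanish there but the Cauchy pieces $f_\nu$ need not, and since the decomposition is determined only up to entire functions this cannot be normalized away; the paper handles such points separately by factoring out a polynomial in the proof of Theorem \ref{co:polynomialapp}, and your argument would need an analogous preliminary reduction.
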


Our proof depends heavily on the Donnelly-Fefferman $L^2$-estimate for the $\bar{\partial}$-operator. In contrast with known results on weighted $L^2$ approximation of holomorphic functions, we allow singularities of the weight function, which makes the result useful.  
An application of Proposition \ref{th:holomorphic approximation} is the following

\begin{theorem} \label{co:polynomialapp}
Let $\Omega$ be a bounded Carath\'{e}odory domain and $\varphi$ a subharmonic function on $\overline{\Omega}$. 
then polynomials are dense in  $H^2(\Omega,e^{-\varphi})$.
\end{theorem}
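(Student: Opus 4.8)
The plan is to deduce Theorem \ref{co:polynomialapp} from Proposition \ref{th:holomorphic approximation} in two stages. First I would apply Proposition \ref{th:holomorphic approximation} in the degenerate case $N=0$, $G_0=\Omega$: a bounded Carath\'{e}odory domain is exactly this case of the hypothesis, so the proposition yields that $H^2(\overline{\Omega},e^{-\varphi})$ --- the functions holomorphic on a neighborhood of $\overline{\Omega}$ that lie in $L^2(\Omega,e^{-\varphi})$ --- is dense in $H^2(\Omega,e^{-\varphi})$. It therefore suffices to show that every $f\in H^2(\overline{\Omega},e^{-\varphi})$ can be approximated by polynomials in the norm $\|\cdot\|_{\Omega,\varphi}$. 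Since the polynomials belong to $H^2(\Omega,e^{-\varphi})$ and $\Omega$ is bounded, one has $\int_\Omega e^{-\varphi}\,d\lambda<+\infty$, a fact I will use to pass from uniform to weighted-$L^2$ estimates.

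For the second stage, fix $f$ holomorphic on a neighborhood $V$ of $\overline{\Omega}$. Here I would invoke the defining feature of a Carath\'{e}odory domain: $\Omega$ is the Carath\'{e}odory kernel of a decreasing sequence of Jordan domains $\Omega_1\supset\overline{\Omega_2}\supset\Omega_2\supset\cdots$ with $\bigcap_j\overline{\Omega_j}=\overline{\Omega}$, the structure exploited classically by Farrell \cite{Farrell34} and Markushevitch \cite{Mark34}. Because $\overline{\Omega_j}\downarrow\overline{\Omega}\subset V$ and each $\overline{\Omega_j}$ is compact, for $j$ large one has $\overline{\Omega_j}\subset V$, so $f$ is holomorphic on a neighborhood of the closed Jordan region $\overline{\Omega_j}$. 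As the complement of $\overline{\Omega_j}$ is connected, Runge's theorem (equivalently Walsh's or Mergelyan's) produces polynomials $p_n\to f$ uniformly on $\overline{\Omega_j}$, hence uniformly on $\overline{\Omega}$. Finally, $\|f-p_n\|_{\Omega,\varphi}^2\le \big(\sup_{\overline{\Omega}}|f-p_n|\big)^2\int_\Omega e^{-\varphi}\,d\lambda\to 0$, which closes the argument.

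The genuinely hard analytic input --- controlling the $\bar\partial$-solution against the possibly singular weight $e^{-\varphi}$ --- is entirely absorbed into Proposition \ref{th:holomorphic approximation}, so the remaining work is mostly organizational. The step I expect to require the most care is the construction of the outer Jordan exhaustion $\{\Omega_j\}$ together with the verification that Runge-type uniform approximation really applies on each $\overline{\Omega_j}$; this is precisely where the Carath\'{e}odory hypothesis (as opposed to an ``inner snake'' domain) is indispensable, since it is what guarantees that $\overline{\Omega}$ can be exhausted from outside by closed regions with connected complement. A secondary point is to confirm that the singularities of the weight do not obstruct the final uniform-to-$L^2$ passage; this is exactly what the integrability $\int_\Omega e^{-\varphi}\,d\lambda<+\infty$ ensures, and if one prefers not to assume a priori that the polynomials lie in the space, this integrability should be extracted from the standing hypotheses before the estimate is run.
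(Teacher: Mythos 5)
Your overall strategy coincides with the paper's: apply Proposition \ref{th:holomorphic approximation} in the case $N=0$ to reduce to approximating a function $F$ holomorphic on a neighborhood of $\overline{\Omega}$, then use Runge's theorem (legitimate because $\overline{\Omega}$ has connected complement for a Carath\'{e}odory domain) and convert the resulting uniform estimate into a weighted-$L^2$ estimate by multiplying by $\int_\Omega e^{-\varphi}\,d\lambda$. The paper applies Runge directly on the compact set $\overline{\Omega}$ rather than on an outer Jordan region $\overline{\Omega_j}$, but that difference is cosmetic.

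The genuine gap is the integrability $\int_\Omega e^{-\varphi}\,d\lambda<+\infty$. You derive it from ``the polynomials belong to $H^2(\Omega,e^{-\varphi})$,'' which is not a hypothesis of the theorem, and at the end you assert that this integrability ``should be extracted from the standing hypotheses'' --- but it cannot be: the whole point of the paper is to allow singular subharmonic weights, and for $\varphi(z)=4\log|z-z_0|$ with $z_0\in\Omega$ one has $e^{-\varphi}=|z-z_0|^{-4}\notin L^1(\Omega)$, so the constant $1$ is not in the space and your final estimate $\|f-p_n\|^2_{\Omega,\varphi}\le\bigl(\sup_{\overline{\Omega}}|f-p_n|\bigr)^2\int_\Omega e^{-\varphi}\,d\lambda$ is vacuous. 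The paper splits into cases according to the Lelong numbers of $\varphi$: when $\nu(\varphi)(x)<2$ for every $x\in\overline{\Omega}$, the weight is integrable and your argument goes through verbatim; otherwise there are finitely many exceptional points $x_0,\dots,x_N$, one writes $\varphi=\psi+\log|Q|^2$ with $Q$ a polynomial vanishing at these points and $\nu(\psi)<2$ everywhere, observes that any $f\in H^2(\Omega,e^{-\varphi})$ has $f/Q$ holomorphic and lying in $H^2(\Omega,e^{-\psi})$, approximates $f/Q$ by a polynomial $P$ in that space, and returns to the original space with the polynomial $PQ$. Without this second case your proof only establishes the theorem under the additional (unstated) hypothesis that $e^{-\varphi}$ is integrable on $\Omega$, and it leaves unaddressed what density of ``the polynomials'' even means when not all polynomials belong to $H^2(\Omega,e^{-\varphi})$.
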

Especially we will have the following 

\begin{corollary} \label{coro:Jordandense}
Let $\Omega$ be a bounded Jordan domain and let $\varphi$ be as in Theorem \ref{co:polynomialapp}. Then polynomials are  dense in $H^2(\Omega,e^{-\varphi})$.
\end{corollary}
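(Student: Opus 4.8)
The plan is to deduce the statement directly from Theorem \ref{co:polynomialapp}, since a bounded Jordan domain is a special case of a bounded Carath\'{e}odory domain. The only thing to check is that the hypotheses of the corollary match those of the theorem, after which the conclusion is immediate.

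First, I would recall the classical fact, already noted in the introduction, that every bounded Jordan domain is a Carath\'{e}odory domain. To see this, let $\Omega$ be the bounded component of the complement of a Jordan curve $\gamma$. By the Jordan curve theorem, $\Omega$ is simply connected, $\mathbb{C} \setminus \overline{\Omega}$ is a single unbounded domain, and $\partial\Omega = \gamma = \partial(\mathbb{C} \setminus \overline{\Omega})$. This is exactly the defining property of a bounded Carath\'{e}odory domain as recalled above: a simply-connected bounded planar domain whose boundary is also the boundary of an unbounded domain.

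Second, I would observe that the hypothesis on the weight is verbatim the same in both statements, namely that $\varphi$ is subharmonic in a neighborhood of $\overline{\Omega}$. Hence $\Omega$ and $\varphi$ satisfy precisely the hypotheses of Theorem \ref{co:polynomialapp}, and applying that theorem yields that polynomials are dense in $H^2(\Omega, e^{-\varphi})$, which is the desired conclusion.

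There is essentially no obstacle to overcome: the corollary is a direct specialization of Theorem \ref{co:polynomialapp}, which in turn rests on Proposition \ref{th:holomorphic approximation} and the Donnelly--Fefferman estimate. The only point that merits a line of justification is the identification of a Jordan domain as a Carath\'{e}odory domain, and this is classical, depending only on the Jordan curve theorem.
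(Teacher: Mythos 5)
Your proposal is correct and matches the paper's (implicit) argument: the paper gives no separate proof of this corollary precisely because, as noted in its introduction, every Jordan domain is a Carath\'eodory domain, so the statement is an immediate specialization of Theorem \ref{co:polynomialapp}. Your justification via the Jordan curve theorem of why a bounded Jordan domain satisfies the Carath\'eodory definition is the only content needed, and it is accurate.
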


\begin{remark}\label{rmk:Taylor}
Under the assumptions of Corollary \ref{coro:Jordandense}, let $f\in H^2(\Omega,e^{-\varphi})$. Then $f$ can be approximated by polynomials in $H^2(\Omega, e^{-\varphi})$ such that the Taylor series of the polynomials around a given point $p\in \Omega$ agrees with the one for $f$ to any given order. 
\end{remark}

It's not the case that polynomials are dense for general psh weight functions so that the corresponding Hilbert spaces contain the polynomials.
An example is provided by the following

\begin{theorem}\label{counterexinC}
Let $\varphi(z)=  |\Im m(z)| + |z|^p$, with $0 < p < 1$. Then the holomorphic polynomials are in $H^2(\C, e^{-\varphi})$, but they are not dense in $H^2(\mathbb{C}, e^{-\varphi})$.
\end{theorem}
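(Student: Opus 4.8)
The plan is to prove the two assertions separately: that all polynomials lie in the space, which is elementary, and that they fail to be dense, which I would establish by producing a bounded linear functional on $H^2(\C,e^{-\varphi})$ that annihilates every polynomial yet is not identically zero. For membership, since $e^{-|\Im z|}\le 1$ it suffices to note that for each $n$,
\[
\int_\C |z|^{2n}e^{-\varphi}\,d\lambda \le \int_\C |z|^{2n}e^{-|z|^p}\,d\lambda = 2\pi\int_0^\infty r^{2n+1}e^{-r^p}\,dr<\infty,
\]
finite precisely because $p>0$; it is this term that forces the presence of $e^{-|z|^p}$ in $\varphi$, since $e^{-|\Im z|}$ alone provides no decay along $\R$. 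Having this, density of polynomials would force any bounded functional that kills them to vanish identically, so exhibiting one that does not is enough.

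I would define
\[
L(f):=\int_\R f(x)\,\psi(x)\,dx
\]
for a function $\psi$ on $\R$ to be chosen, and verify that (i) $L$ is bounded on $H^2(\C,e^{-\varphi})$, (ii) $L(z^n)=0$ for all $n\ge 0$, and (iii) $L\not\equiv 0$. The boundedness in (i) rests on a trace estimate: since $|f|^2$ is subharmonic, the sub-mean-value inequality over unit disks gives $|f(x)|^2\le \pi^{-1}\int_{D(x,1)}|f|^2\,d\lambda$, while on $D(x,1)$ one has $e^{-|x|^p}\le e^2\,e^{-|\Im z|-|z|^p}$ (using $|\Im z|\le 1$ and the subadditivity $(|x|+1)^p\le |x|^p+1$, valid for $0<p<1$). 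Fubini then yields $\int_\R |f(x)|^2 e^{-|x|^p}\,dx \le C\,\|f\|_{\C,\varphi}^2$, so by Cauchy--Schwarz $|L(f)|^2\le C\,\|f\|_{\C,\varphi}^2\int_\R|\psi|^2 e^{|x|^p}\,dx$, and $L$ is bounded as soon as $\int_\R|\psi(x)|^2 e^{|x|^p}\,dx<\infty$.

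It remains to build $\psi$ with all moments vanishing, with $\int_\R|\psi|^2 e^{|x|^p}<\infty$, and with $L\not\equiv 0$; this is the heart of the matter and is exactly where $0<p<1$ enters. I would set $\psi=\widehat{h^{*k}}=(\widehat h)^k$, where $h\ge 0$ is a nonzero compactly supported function of Gevrey class $1/p>1$ (these exist because such classes are non-quasianalytic) and $k$ is a positive integer. Gevrey regularity of order $1/p$ with compact support gives $|\widehat h(x)|\le Ce^{-c|x|^p}$ for some $c>0$, hence $|\psi(x)|\le C^k e^{-kc|x|^p}$; choosing $k>1/(2c)$ makes $\int_\R|\psi|^2e^{|x|^p}<\infty$, yielding (i). Localizing $\mathrm{supp}\,h$ in a small interval $(0,\delta)$ with $k\delta<1/2$, the function $h^{*k}$ is supported in $(0,1/2)$, away from the origin, so that $\int_\R x^n\psi\,dx$ is a constant multiple of $(h^{*k})^{(n)}(0)=0$ for every $n$, yielding (ii). Finally, Fourier inversion gives $\widehat\psi$ proportional to $h^{*k}(-\,\cdot)$, which is nonzero at some $t_0\in(-1/2,0)$; and a direct integral estimate as in the membership step (using $-2t_0y-|\Im z|\le -(1-2|t_0|)|y|$) shows $e^{it_0 z}\in H^2(\C,e^{-\varphi})$ whenever $|t_0|<1/2$. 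Hence $L(e^{it_0 z})=\widehat\psi(t_0)\ne 0$, giving (iii) and completing the argument.

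The main obstacle is the simultaneous control of the two competing demands on $\psi$: vanishing of all moments, which is a flatness (non-analyticity) constraint on $\widehat\psi$, together with decay fast enough to beat the growing factor $e^{|x|^p/2}$. Such a $\psi$ cannot exist in the super-linear regime $p\ge 1$, and arranging it when $p<1$ is precisely what the Gevrey construction secures; the convolution power $h^{*k}$ and the localization of $\mathrm{supp}\,h$ are the bookkeeping needed to boost the decay rate past the threshold while keeping the admissible frequency $t_0$ inside $(-1/2,1/2)$.
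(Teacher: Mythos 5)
Your proof is correct, but it takes a genuinely different route from the paper's. The paper argues by direct contradiction: it exhibits $\cos(z/2)\in H^2(\C,e^{-\varphi})$, shows via the sub-mean-value inequality that any approximating sequence $P_n$ with bounded norm satisfies $\log|P_n(x)|\le C+\tfrac12|x|^p$ on the real axis, and then uses the Poisson extension of $|x|^p$ to the upper half-plane --- shown to be comparable to $|z|^p$ from above and below --- as a harmonic majorant of the subharmonic function $\log|P_n|$, concluding $|P_n(z)|\le e^{C_1+C_p|z|^p}$ there. Since $\cos(z/2)$ grows like $e^{|z|/4}$ in a sector, the weighted $L^2$ distance from $P_n$ to $\cos(z/2)$ stays bounded below, a contradiction. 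Your argument is instead a duality argument: you build an annihilating functional $L(f)=\int_\R f\psi\,dx$, with your trace estimate $\int_\R|f(x)|^2e^{-|x|^p}dx\le C\|f\|^2_{\C,\varphi}$ playing the role of the paper's sub-mean-value step, and the Gevrey/non-quasianalyticity construction supplying a $\psi$ with all moments zero, decay beating $e^{|x|^p/2}$, and $\widehat\psi\not\equiv 0$ on $(-1/2,1/2)$; the test functions $e^{it_0z}$ with $|t_0|<1/2$ replace $\cos(z/2)$. Both proofs exploit the same phenomenon --- on $\R$ the weight is only $e^{-|x|^p}$ with $p<1$ (a non-quasianalytic weight), while the space contains functions of exponential type up to $1/2$ in $y$ --- but your route makes the link to the classical Bernstein weighted approximation problem on the line explicit and is more modular, whereas the paper's Phragm\'en--Lindel\"of-style argument is more elementary and self-contained, needing only the explicit two-sided estimate for the harmonic extension of $|x|^p$. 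The only point to tidy in your write-up is the Fourier sign convention in step (iii): whether $L(e^{it_0z})$ is proportional to $h^{*k}(t_0)$ or to $h^{*k}(-t_0)$ affects only the sign of the admissible $t_0$, not the conclusion, since both signs keep $|t_0|<1/2$.
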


\indent A general moon-shaped domain is a bounded domain whose boundary consists of two Jordan curves having exactly one point in common. This point is called the multiple boundary point. The moon-shaped domain is an example of a non-Carath\'eodory Runge domain in $\mathbb{C}$. Keldych \cite{Keldych1939} was the first to study the $L^2$ polynomials approximation property of moon-shaped domains without weight. Here we generalize his result with singular weight as in the following.

\begin{theorem} \label{moonshapeddomain}
Let $\Omega$ be a moon-shaped domain with the origin inside the inner Jordan curve of the boundary $\partial{\Omega}$. Let $\varphi$ be a subharmonic function on $\overline{\Omega}$.
Then polynomials are dense in $H^2(\Omega, e^{-\varphi})$  if and only if the function $\frac{1}{\sqrt{z}}$ can be approximated arbitrarily well by polynomials in $L^2(\Omega, e^{-\varphi})$.
\end{theorem}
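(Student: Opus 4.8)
The forward implication is immediate: since the origin lies inside the inner curve, $0\notin\overline\Omega$, so the chosen branch of $1/\sqrt z$ is bounded on $\overline\Omega$ and therefore lies in $H^2(\Omega,e^{-\varphi})$ as soon as the constants do; if the polynomials are dense, it is in particular a limit of polynomials. The substance is the converse, and my plan is to pass to the double cover of $\Omega$ determined by the square root, where the moon-shaped domain unfolds into a pair of Carath\'eodory domains to which Theorem \ref{co:polynomialapp} applies.

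Concretely, since $\Omega$ is simply connected and omits the origin, the two branches $\pm\sqrt{\cdot}$ are single-valued on it, and the preimage of $\Omega$ under $z=w^2$ splits as $\widetilde\Omega=\widetilde\Omega_+\sqcup\widetilde\Omega_-$ with $\widetilde\Omega_+=\sqrt\Omega$ and $\widetilde\Omega_-=-\widetilde\Omega_+$, the two components being interchanged by $w\mapsto -w$. Each $\widetilde\Omega_\pm$ is a Jordan domain, its boundary being the two square-root arcs of $\gamma_0,\gamma_1$ meeting at the two points $\pm\sqrt p$, and hence Carath\'eodory. Transporting the weight by $\tilde\varphi(w)=\varphi(w^2)-2\log|w|$, which is subharmonic near $\overline{\widetilde\Omega}$ because $\varphi(w^2)$ is subharmonic and $-2\log|w|$ is harmonic away from $0\notin\overline{\widetilde\Omega}$, the substitution $z=w^2$ gives for $F(w)=f(w^2)$ the identity $\|F\|_{\widetilde\Omega,\tilde\varphi}^2=\tfrac12\|f\|_{\Omega,\varphi}^2$. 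Thus $H^2(\Omega,e^{-\varphi})$ is isometric, up to the constant, to the even part of $H^2(\widetilde\Omega,e^{-\tilde\varphi})$, and polynomials in $z$ correspond exactly to even polynomials in $w$.

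The engine is Theorem \ref{co:polynomialapp}, applied to each Carath\'eodory component: it yields that polynomials in $w$ are dense in $H^2(\widetilde\Omega_+,e^{-\tilde\varphi})$ and in $H^2(\widetilde\Omega_-,e^{-\tilde\varphi})$ separately. Density of polynomials in the whole space $H^2(\widetilde\Omega,e^{-\tilde\varphi})=H^2(\widetilde\Omega_+,e^{-\tilde\varphi})\oplus H^2(\widetilde\Omega_-,e^{-\tilde\varphi})$ can then fail only through the \emph{separation} of the two pieces, that is, through the approximability of the idempotent $e_+$ equal to $1$ on $\widetilde\Omega_+$ and $0$ on $\widetilde\Omega_-$. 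This is exactly where the hypothesis enters and where $1/\sqrt z$ is forced: writing $e_+=\tfrac12\bigl(1+\tfrac{w}{\sqrt z}\bigr)$, where $w/\sqrt z$ (with $z=w^2$) equals $\pm1$ on $\widetilde\Omega_\pm$, the odd function $w/\sqrt z=w\cdot(1/\sqrt z)$ is approximable by odd polynomials $wP(w^2)$ in $H^2(\widetilde\Omega,e^{-\tilde\varphi})$ if and only if $1/\sqrt z$ is approximable by polynomials $P(z)$ in $L^2(\Omega,e^{-\varphi})$, the two norms being comparable since $|z|$ is bounded above and below on $\Omega$. Hence the assumption produces polynomial approximants $E_k\to e_+$ in $w$.

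To finish, I would glue across the cover: given $(F_+,F_-)\in H^2(\widetilde\Omega_+,e^{-\tilde\varphi})\oplus H^2(\widetilde\Omega_-,e^{-\tilde\varphi})$, approximate $F_+$ on $\widetilde\Omega_+$ by a polynomial $R$ via Theorem \ref{co:polynomialapp} and consider $R\,E_k$; because $R$ is a fixed polynomial, multiplication by it is bounded on the relevant $L^2$ spaces, so $R\,E_k$ tends to $F_+$ on $\widetilde\Omega_+$ and to $0$ on $\widetilde\Omega_-$, and symmetrically for $(0,F_-)$, whence polynomials in $w$ are dense in $H^2(\widetilde\Omega,e^{-\tilde\varphi})$. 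Finally, for $f\in H^2(\Omega,e^{-\varphi})$ the even function $F(w)=f(w^2)$ is a limit of polynomials $Q_n(w)$; replacing $Q_n$ by its even part, an orthogonal projection that does not increase the distance to the even limit $F$, gives $P_n(w^2)$ with $P_n$ a polynomial in $z$, and transporting back yields $P_n\to f$ in $L^2(\Omega,e^{-\varphi})$. The steps I expect to need the most care are the geometric verification that the square-root unfolding genuinely produces two Carath\'eodory Jordan domains despite the cusps at $\pm\sqrt p$, and the gluing across the disconnected cover; the conceptual heart, however, is the identification of the obstruction to separating the two sheets with the single test function $1/\sqrt z$.
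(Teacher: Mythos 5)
Your proof is correct and follows essentially the same route as the paper: the conformal map $w=\sqrt z$ unfolds the moon-shaped domain into a Jordan (hence Carath\'eodory) domain, Theorem \ref{co:polynomialapp} supplies polynomial density there, and the even/odd decomposition in $w$ isolates $1/\sqrt z$ as the exact obstruction. The only difference is bookkeeping: the paper stays on a single branch/sheet, approximates $wf(w^2)$ by one polynomial $P(w)$ with weight $\varphi(w^2)$, and splits $P$ by parity, which avoids your double cover, the idempotent $e_+$, and the gluing step, whereas you absorb the Jacobian into the weight via $-2\log|w|$ (harmless, since $0\notin\overline{\Omega}$ makes it harmonic near the unfolded closure).
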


We give two concrete examples of moon-shaped domains where density holds. The first example is based on Keldych \cite{Keldych1939} and is an application of Theorem \ref{moonshapeddomain}.\\

\begin{definition}
Let $\phi$ be a subharmonic function on $\mathbb C.$ Let $\mu$ denote the Laplacian of $ \frac {1}{2\pi}\phi.$ We say that $\phi$ satisfies \textsl{condition (A)} if the mass of $\mu$ on the closed unit disc is strictly less than 2.
\end{definition}

\begin{example} \label{moonshapeddomainarbitrarily}
There exists a moon-shaped domain with the unit circle being the outer Jordan curve, such that for any subharmonic function $\varphi$ on $\mathbb{C}$ satisfying condition (A),  polynomials are in $H^2(\Omega,e^{-\varphi})$ and dense in it.
\end{example}

\begin{example} \label{moonshapeddomainarbitary}
There exist a moon-shaped domain bounded by two circles and a subharmonic function $\varphi$ on $\Omega$ so that polynomials are in $H^2(\Omega,e^{-\varphi})$ and are dense in it.
\end{example}


\begin{theorem} \label{moonshapeddomainnot}
Let $\Omega$ be a moon-shaped domain bounded by two circles and let $\varphi$ be a subharmonic function on $\Omega$ which is uniformly bounded above. 
Then the set of polynomials which is in $H^2(\Omega, e^{-\varphi})$ is never a dense subset of $H^2(\Omega,e^{-\varphi})$.
\end{theorem}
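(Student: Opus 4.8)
The plan is to reduce the weighted statement to an unweighted Bergman-space statement and then exploit the special geometry of two internally tangent circles through a conformal change of variables. Since $\varphi$ is bounded above, say $\varphi\le M$ on $\Omega$, we have $e^{-\varphi}\ge e^{-M}$, so that $\|f\|_{L^2(\Omega)}\le e^{M/2}\|f\|_{\Omega,\varphi}$ for every $f$. Hence $H^2(\Omega,e^{-\varphi})$ sits continuously inside the ordinary Bergman space $A^2(\Omega)=H^2(\Omega,1)$, and convergence in $\|\cdot\|_{\Omega,\varphi}$ forces convergence in $A^2(\Omega)$. Consequently the $\|\cdot\|_{\Omega,\varphi}$-closure of the polynomials is contained in $P:=\overline{\{\text{polynomials}\}}^{\,A^2(\Omega)}$. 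It therefore suffices to produce a single $f_0\in H^2(\Omega,e^{-\varphi})$ with $f_0\notin P$.

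Write $\Omega=D\setminus\overline{D'}$, where $D$ is the open disk bounded by the outer circle and $D'$ the open disk bounded by the inner circle, tangent at a point $p\in\partial\Omega$. Fix $z_1\in D'$; since $\Omega$ is simply connected and $z_1\notin\overline\Omega$, a single-valued branch of $1/\sqrt{z-z_1}$ exists on $\Omega$ and is bounded there. Assuming $H^2(\Omega,e^{-\varphi})\ne\{0\}$ (otherwise there is nothing to prove), pick any $g\not\equiv0$ in it; then both $g$ and $g/\sqrt{z-z_1}$ lie in $H^2(\Omega,e^{-\varphi})$, the latter because $1/\sqrt{z-z_1}$ is bounded. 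I claim at least one of them lies outside $P$, and this is the desired $f_0$. Indeed, the key geometric input (stated next) is that every element of $P$ extends holomorphically to the full disk $D$; if both $g$ and $g/\sqrt{z-z_1}$ extended, their extensions $G,H\in\mathcal{O}(D)$ would satisfy $G^2=(z-z_1)H^2$ on $\Omega$, hence on $D$, which is impossible at $z_1$ by comparing the even versus odd orders of vanishing, unless $G\equiv0$, contradicting $g\not\equiv0$.

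The heart of the proof is therefore the following reverse Bergman inequality, valid precisely because the two boundary curves are circles: there is a constant $C$ with
\[
\int_{D'}|q|^2\,d\lambda\ \le\ C\int_{\Omega}|q|^2\,d\lambda\qquad\text{for every polynomial }q.
\]
Granting this, any $A^2(\Omega)$-Cauchy sequence of polynomials is $A^2(D)$-Cauchy, so its limit in $P$ extends holomorphically to $D$, which is exactly the input used above. To prove the inequality I would pass to the conformal model: the Möbius map $w=1/(z-p)$ sends $p$ to $\infty$ and carries the two tangent circles to two parallel lines $\{\Im w=a\}$ and $\{\Im w=b\}$ with $a<b$, so that $\Omega$ becomes the strip $S=\{a<\Im w<b\}$, the inner disk $D'$ becomes the half-plane $\{\Im w<a\}$, and polynomials become functions holomorphic on $\{\Im w\le b\}$. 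The area element transforms by the conformal factor $|dz|^2=|w|^{-4}|dw|^2$, so the inequality becomes a comparison, with weight $|w|^{-4}$, of the $L^2$ mass of a half-plane-holomorphic function over $\{\Im w<a\}$ against its mass over the adjacent strip $S$. Representing such functions by their Fourier transform along the lines $\Im w=\mathrm{const}$, the line-norms are monotone in depth, and the only obstruction to the inequality comes from the low frequencies, which correspond to the behavior at the two ends of $S$, i.e.\ near the tangent point $p$; there the conformal weight decays like $|\Re w|^{-4}$ and exactly suppresses this contribution.

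The main obstacle I anticipate is making this last step rigorous: the weight $|w|^{-4}$ is not diagonal in the Fourier variable, so the clean Plancherel computation on the strip must be replaced by careful estimates in which the decay of the weight at the ends is played off against the low-frequency growth of the Paley--Wiener densities. This is exactly the place where the quadratic tangency of two circles---encoded in the exponent $4$ of the conformal factor---is used; for a moon-shaped domain with a sharper cusp the corresponding weight decays at a different rate, consistent with the fact (see Example \ref{moonshapeddomainarbitrarily}) that density may then hold. A secondary technical point is the nontriviality of $H^2(\Omega,e^{-\varphi})$ under a merely subharmonic, bounded-above weight, needed only to supply the starting function $g$.
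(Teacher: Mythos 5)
Your overall strategy is the classical one and matches the paper's in outline: reduce to the unweighted Bergman space using $\varphi\le M$, show that $A^2(\Omega)$-limits of polynomials extend holomorphically across the hole $D'$, and exhibit a member of $H^2(\Omega,e^{-\varphi})$ that cannot so extend (your parity argument with $g$ and $g/\sqrt{z-z_1}$ is a correct variant of the paper's choice $f/(z-p)$). The genuine gap is the step you yourself flag as the ``main obstacle'': the reverse inequality $\int_{D'}|q|^2\,d\lambda\le C\int_\Omega|q|^2\,d\lambda$ is asserted, not proved, and the Fourier-analytic sketch on the strip does not close it. Worse, this inequality is much stronger than what you need: it asks for square-integrability of the extension all the way up to the tangency point $Q$, where the crescent has width comparable to $|z-Q|^2$ and area arguments give you no margin; nothing in your outline controls the low-frequency/near-$Q$ contribution, and the proof cannot be considered complete (or the lemma even reliably true) as stated. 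All you actually need is locally uniform convergence of the extensions on compact subsets of $D'$, i.e.\ bounded point evaluations in the hole.

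The paper gets exactly that by an elementary argument you should adopt in place of the strip analysis. Take a circle $\Gamma$ passing through $Q$ and enclosing $\overline{D'}$, contained in $\overline{D}$. Because the two boundary curves are circles tangent at $Q$, one has $d_{\partial\Omega}(w)\ge C|w-Q|^2$ for $w\in\Gamma$. The sub-mean-value inequality for $|P_n-P_m|^2$ on the disc $B\bigl(w,d_{\partial\Omega}(w)\bigr)\subset\Omega$, together with $1\le \widetilde C e^{-\varphi}$, gives
\begin{equation*}
|P_n(w)-P_m(w)|\ \le\ \frac{\sqrt{\widetilde C}}{\sqrt{\pi}\,d_{\partial\Omega}(w)}\,\|P_n-P_m\|_{\Omega,\varphi},
\qquad w\in\Gamma\setminus\{Q\},
\end{equation*}
hence $\bigl|(w-Q)^2\bigl(P_n(w)-P_m(w)\bigr)\bigr|\le C'\|P_n-P_m\|_{\Omega,\varphi}$ on $\Gamma$, and by the maximum principle on the whole closed disc bounded by $\Gamma$. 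Thus $(z-Q)^2P_n$ converges uniformly there, $(z-Q)^2f$ extends holomorphically inside $\Gamma$, and since $(z-Q)^2$ does not vanish in the open interior, $f$ itself extends across $D'$ --- which is the input your final contradiction requires. This replaces your unproved $L^2(D')$ bound by a two-line estimate and is where the quadratic tangency of the two circles is genuinely used.
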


This paper is set-up as follows. In Section 2, we prove Proposition \ref{th:holomorphic approximation} and Theorem \ref{co:polynomialapp}, exploiting the property of Carath\'{e}odory domains in order to be able to exhaust them from outside by Jordan domains that are conformally equivalent to the unit disc. Then, we apply Donnelly-Fefferman's $L^2$-estimates on each of those to obtain the weighted $L^2$ approximation. In Section 3, we give a counterexample on $\mathbb{C}$ where we exhibit a subharmonic function $\varphi$ for which the polynomials are in $H^2(\mathbb{C}, e^{-\varphi})$ but they are not dense in it (Theorem \ref{counterexinC}). 
In Section 4, we prove Theorem \ref{moonshapeddomain} and give an example of a moon-shaped domain where the density is proved by approximating $\frac{1}{\sqrt{z}}$ (Example \ref{moonshapeddomainarbitrarily}). In Section 5, we present Example \ref{moonshapeddomainarbitary} and we finally prove, in Section 6,  Theorem \ref{moonshapeddomainnot}.

\section{Proof of Theorem \ref{co:polynomialapp}}

We observe 
that it suffices to prove Proposition \ref{th:holomorphic approximation}
and Theorem \ref{co:polynomialapp} when the subharmonic function $\varphi$ is globally defined.
To see this, let $\phi$ be a subharmonic function defined on a bounded open set $V$ containing $\overline{\Omega}$
and choose an open set $U, \overline{\Omega}\subset U\Subset V.$ Then $\mu:= \Delta(\phi)_{\mid_U}$ is a positive measure with bounded mass on $\mathbb C$. Hence there is a globally defined subharmonic function 
$\varphi$ such that $\Delta(\varphi)=\mu.$ But then $\phi=\varphi+h$ on $U$ for some harmonic function $h$.
Since $h$ is uniformly bounded on $\Omega$, it follows that
the Hilbert spaces $H^2(\Omega,e^{-\phi})$ and $H^2(\Omega, e^{-\varphi})$ are the same and the norms are equivalent.

We use, in the next Lemma, the following classical result from one complex variable:

\begin{theorem} [cf.\,\cite{Tsuji59}, p.\,382\,]\label{th:caratheodory}
Let $\{\Omega_n\}_{n=1}^\infty$ be a sequence of uniformly bounded simply connected domains in ${\mathbb C}$ and $\Omega$ a bounded simply connected domain,  all containing the origin, so that the Hausdorff distance between $\partial \Omega_n$ and $\partial \Omega$ tends to zero as $n\rightarrow \infty$. If we map ${\mathbb D}$ conformally onto $\Omega_n$ by $w= f_n(z)$, $f_n(0) = 0$, $f'_n(0) > 0$, then $f_n$ converges locally uniformly to $f\in {\mathcal O}({\mathbb D})$ such that $w=f(z)$ maps ${\mathbb D}$ conformally onto $\Omega$.
\end{theorem}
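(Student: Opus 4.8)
The statement is the Carath\'eodory kernel convergence theorem, so the plan is to run the standard normal-families argument and then pin down the limit using the Hausdorff convergence of the boundaries. First I would establish compactness. Since the $\Omega_n$ are uniformly bounded, say $\Omega_n \subset D(0,M)$ for all $n$, each $f_n$ maps ${\mathbb D}$ into $D(0,M)$, so $\{f_n\}$ is uniformly bounded and hence a normal family by Montel's theorem. Thus every subsequence admits a locally uniformly convergent sub-subsequence; fix one such convergent subsequence $f_{n_k} \to g \in {\mathcal O}({\mathbb D})$. The entire argument reduces to showing that every subsequential limit $g$ equals the unique conformal map $f$ of ${\mathbb D}$ onto $\Omega$ normalized by $f(0)=0$, $f'(0)>0$; once this is known, normality forces the full sequence $f_n$ to converge to $f$.

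Next I would show $g$ is univalent and nonconstant by controlling $f_n'(0)$. The Schwarz lemma applied to $f_n \colon {\mathbb D} \to D(0,M)$ gives the upper bound $f_n'(0) \le M$. For a lower bound, note $0 \in \Omega$ and $\Omega$ is open, so $D(0,\rho) \subset \Omega$ for some $\rho>0$; since $0 \in \Omega_n$ for all $n$ and $\mathrm{dist}(\partial\Omega_n, \partial\Omega) \to 0$, for large $n$ we have $D(0,\rho/2) \subset \Omega_n$. Applying the Schwarz lemma to $f_n^{-1}$ on $D(0,\rho/2)$ yields $f_n'(0) \ge \rho/2$. Hence $g'(0) = \lim_k f_{n_k}'(0) \ge \rho/2 > 0$, so $g$ is nonconstant, and by Hurwitz's theorem the locally uniform limit of the univalent maps $f_{n_k}$ is itself univalent.

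Finally I would identify $g({\mathbb D})$ with $\Omega$. For the inclusion $g({\mathbb D}) \subset \Omega$: each value $g(z)=\lim_k f_{n_k}(z)$ is a limit of points of $\Omega_{n_k}$, and the Hausdorff convergence of boundaries forces such limits into $\overline{\Omega}$; since $g({\mathbb D})$ is open it lies in $\Omega$. For the reverse inclusion, fix $w_0 \in \Omega$; then $\mathrm{dist}(w_0,\partial\Omega)>0$, so $w_0 \in \Omega_n$ for all large $n$, whence $f_n - w_0$ has exactly one zero in ${\mathbb D}$. Choosing a circle in ${\mathbb D}$ on which $g - w_0$ does not vanish and applying Hurwitz's theorem (the argument principle being stable under locally uniform convergence) shows $g-w_0$ also has a zero, i.e. $w_0 \in g({\mathbb D})$. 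Therefore $g$ is a normalized conformal map of ${\mathbb D}$ onto $\Omega$, and by the uniqueness part of the Riemann mapping theorem $g=f$.

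The main obstacle is precisely this last identification: translating the Hausdorff convergence of the boundary curves into the two set inclusions, and in particular justifying the Hurwitz/argument-principle step uniformly enough across the subsequence to conclude surjectivity onto $\Omega$. The bounds on $f_n'(0)$ and the normality are routine, but the kernel identification is where the geometric hypothesis on $\partial\Omega_n$ does the real work.
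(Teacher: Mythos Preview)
The paper does not prove this theorem at all: it is quoted as a classical result with a reference to Tsuji's book (p.~382) and then used as a black box in the proof of Lemma~\ref{le:caratheodory}. So there is no ``paper's own proof'' to compare against.

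That said, your outline is the standard normal-families/Hurwitz proof of the Carath\'eodory kernel theorem, and the overall structure is correct. Two remarks on the details. First, your inclusion $g(\mathbb D)\subset\Omega$ is argued a bit loosely; a cleaner way is to note that for each $p\in\partial\Omega$ there exist $p_k\in\partial\Omega_{n_k}$ with $p_k\to p$, so $f_{n_k}-p_k$ never vanishes on $\mathbb D$, and Hurwitz then gives $p\notin g(\mathbb D)$; connectedness of $g(\mathbb D)$ and $0\in g(\mathbb D)\cap\Omega$ finish it. Second, for the surjectivity step you correctly flag the real difficulty: from ``$f_{n_k}-w_0$ has exactly one zero in $\mathbb D$'' one cannot conclude directly via Hurwitz that $g-w_0$ has a zero, because the preimages $f_{n_k}^{-1}(w_0)$ might drift to $\partial\mathbb D$. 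The usual fix is to bypass this by a Schwarz-lemma comparison: once $g(\mathbb D)\subset\Omega$ is known, $f^{-1}\circ g$ gives $g'(0)\le f'(0)$; conversely, the Hausdorff hypothesis shows that every compact subset of $\Omega$ eventually lies in $\Omega_{n_k}$, which (again via Schwarz applied to $f_{n_k}^{-1}\circ f$ on slightly shrunk discs) forces $f'(0)\le\liminf f_{n_k}'(0)=g'(0)$. Equality in Schwarz then yields $g=f$. Your sketch is on the right track, and you have already identified where the work lies.
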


%

For a planar domain $\Omega$, let $SH^-(\Omega)$ denote the set of negative subharmonic functions on $\Omega$. Our key observation to prove Proposition \ref{th:holomorphic approximation} is the following:

\begin{lemma} \label{le:caratheodory}
 Let $\Omega$ be a bounded Carath\'{e}odory domain. Then there exists a sequence of bounded simply-connected domains $\Omega_n\supset \overline{\Omega}$, a sequence of positive numbers $\varepsilon_n\rightarrow 0$ $(n\rightarrow \infty)$, and a sequence of continuous functions $\rho_n\in SH^{-}(\Omega_n)$ such that

$(1)$ $\Omega_{n,-\varepsilon_n}:=\{z\in \Omega_n:\rho_n(z)<-\varepsilon_n\}\subset \Omega$,

$(2)$ the volume of $\Omega \setminus \Omega_{n,-2\varepsilon_n}$ tends to 0 as $n\rightarrow\infty$.

\end{lemma}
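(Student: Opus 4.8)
The plan is to exploit the defining property of a Carathéodory domain together with the conformal convergence theorem (Theorem \ref{th:caratheodory}) to build the approximating domains $\Omega_n$ from \emph{outside} $\overline\Omega$, and then to produce the subharmonic exhaustion functions $\rho_n$ via pullback of a standard model.

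First I would use the Carathéodory property. Since $\partial\Omega$ is also the boundary of an unbounded domain $W$, the complement $\mathbb{C}\setminus\overline\Omega$ approaches $\partial\Omega$ from outside. Concretely, I would construct a sequence of bounded simply connected domains $\Omega_n$ with $\overline\Omega\subset\Omega_n$ whose boundaries $\partial\Omega_n$ converge to $\partial\Omega$ in the Hausdorff distance as $n\to\infty$. The natural device is to take level sets (or small outward enlargements) of the exterior; because $\Omega$ is Carathéodory, one can shrink the outer approximating Jordan domains down onto $\overline\Omega$ without the boundaries failing to converge. I may normalize so that the origin lies in $\Omega$ and each $\Omega_n$ contains the origin, so that Theorem \ref{th:caratheodory} applies.

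Next I would transfer a model subharmonic function through the Riemann maps. Map $\mathbb{D}$ conformally onto $\Omega_n$ by $f_n$ with $f_n(0)=0$, $f_n'(0)>0$; by Theorem \ref{th:caratheodory} these converge locally uniformly to the Riemann map $f$ of $\Omega$. On the disc the function $|w|^2-1$ (or $\log|w|$) is a natural negative subharmonic exhaustion, so I set $\rho_n(z):=|f_n^{-1}(z)|^2-1$ on $\Omega_n$, which is continuous, negative, and subharmonic (indeed harmonic composed with a holomorphic map, hence subharmonic). Then the sublevel set $\Omega_{n,-\varepsilon_n}=\{\rho_n<-\varepsilon_n\}$ is the image under $f_n$ of a slightly smaller concentric disc $\{|w|^2<1-\varepsilon_n\}$. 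Choosing $\varepsilon_n\to 0$ suitably slowly, the locally uniform convergence $f_n\to f$ forces $\Omega_{n,-\varepsilon_n}$ to exhaust $\Omega$ from inside, giving both (1) that this set is contained in $\Omega$ and (2) that the leftover $\Omega\setminus\Omega_{n,-2\varepsilon_n}$ shrinks in area.

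The main obstacle I anticipate is the quantitative matching of the two scales: one must choose $\varepsilon_n$ coordinated with $n$ so that the \emph{inner} sublevel set $\Omega_{n,-\varepsilon_n}$ genuinely sits inside $\Omega$ (not just inside $\Omega_n$), while simultaneously the \emph{slightly larger} sublevel set $\Omega_{n,-2\varepsilon_n}$ already covers almost all of $\Omega$ in measure. The delicate point is that $f_n\to f$ only locally uniformly on $\mathbb{D}$, so control degrades near $\partial\mathbb{D}$, exactly where the boundary behavior matters; Carathéodory's theorem on kernel convergence and the Hausdorff convergence of boundaries are what rescue this, ensuring $f_n(\{|w|<r\})$ and $f(\{|w|<r\})$ are uniformly close for each fixed $r<1$. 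I would make (1) hold by taking $r_n=\sqrt{1-\varepsilon_n}$ close enough to $1$ that the compact set $\overline{f_n(\{|w|<r_n\})}$ is pushed inside $\Omega$ using the uniform closeness of the maps, and make (2) hold because $f(\{|w|<r\})\uparrow\Omega$ as $r\uparrow 1$, so the area of the complementary sliver tends to zero; the subtlety is only in interleaving these two requirements, which is a routine diagonal/monotonicity argument once the convergence estimates are in place.
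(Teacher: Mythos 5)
Your proposal follows essentially the same route as the paper: outer approximation of $\Omega$ by bounded simply connected domains shrinking onto $\overline{\Omega}$ via the Carath\'eodory property, Riemann maps converging by Theorem \ref{th:caratheodory}, the pulled-back negative subharmonic exhaustion ($|f_n|-1$ in the paper; your $|f_n^{-1}|^2-1$ is the same device), and a choice of $\varepsilon_n\to 0$ balancing requirement (1) against requirement (2). The only real difference is presentational: the paper makes (1) automatic by \emph{defining} $\varepsilon_n:=\max\{1-|f_n(z)|:z\in\overline{\Omega}_n\setminus\Omega\}$ and then proves $\varepsilon_n\to 0$ from the locally uniform convergence of the Riemann maps, which is exactly the interleaving you describe as a diagonal argument.
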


Before proving the Lemma we recall the following $L^2$-estimates for the $\bar\partial$-operator which will be used here.

\begin{proposition}[Donnelly-Fefferman, \cite{DF83}]\label{th:D-Fth}
Let $\Omega \subset \mathbb{C}^n$ be a pseudoconvex domain and $\varphi \in psh(\Omega)$. Suppose that $\psi $ is a $C^2$ strictly psh function which satisfies
\begin{eqnarray}\label{eq:assumption}
i\partial\overline{\partial} \psi \geq i\partial \psi \wedge \overline{\partial} \psi.
\end{eqnarray}
Then for each $\overline{\partial}$-closed $(0,1)$-form $v$, there exists a solution $u$ to $\overline{\partial} u = v$ satisfying
\begin{eqnarray}\label{eq:estimate1}
\int _{\Omega} |u|^2e^{-\varphi}d\lambda \leq C\int _{\Omega} |v|_{i\partial \overline{\partial} \psi}^2e^{-\varphi}d\lambda,
\end{eqnarray}
where $C>0$ is an absolute constant, provided that the right hand side of (\ref{eq:estimate1}) is finite.
\end{proposition}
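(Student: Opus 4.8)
Since this is precisely the Donnelly--Fefferman estimate, the plan is not to reprove it from scratch but to recover it from H\"ormander's $L^2$-existence theorem for $\overline{\partial}$ by the twisting technique, in the streamlined form due to Berndtsson and B\l ocki; the target is the classical absolute constant $C=4$. The whole argument hinges on turning the solvability statement (\ref{eq:estimate1}) into a single a priori inequality for $\overline{\partial}$-closed forms, and on producing the metric $i\partial\overline{\partial}\psi$ out of a twist while keeping the numerical constant independent of $\varphi$, $\psi$ and $\Omega$.

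First I would carry out the standard reductions so that all objects are smooth and all ranges of $\overline{\partial}$ are closed, without degrading the constant. Exhaust $\Omega$ by an increasing sequence of relatively compact, smoothly bounded, strictly pseudoconvex subdomains $\Omega_j$; on each $\Omega_j$ replace $\varphi$ by a decreasing sequence of smooth strictly psh functions, and perturb $\psi$ to a smooth strictly psh function still satisfying (\ref{eq:assumption}). On such smooth bounded data H\"ormander's machinery applies and $\overline{\partial}$ has closed range in the relevant weighted $L^2$-spaces. One proves (\ref{eq:estimate1}) with one fixed $C$ on each $\Omega_j$ and then passes to the limit: as the right-hand side of (\ref{eq:estimate1}) is finite and $C$ is uniform, the solutions $u_j$ are bounded in $L^2(\Omega,e^{-\varphi})$, and a weak limit (extracted by Banach--Alaoglu, using Montel to keep it holomorphic away from the support of $v$) solves $\overline{\partial} u=v$ with the same bound. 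The only thing to monitor is that none of these approximations inflates $C$.

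The core is a duality computation. Let $u$ be the solution of $\overline{\partial} u=v$ of minimal norm in $L^2(\Omega,e^{-\varphi})$, so that $u\perp\ker\overline{\partial}$; by closed range one can write $u=\overline{\partial}^{\,*}_{\varphi}\alpha$ for a $(0,1)$-form $\alpha$ in the range of $\overline{\partial}$, in particular with $\overline{\partial}\alpha=0$. Writing $\langle\cdot,\cdot\rangle_\varphi$ for the $L^2(e^{-\varphi})$ pairing, one has $\|u\|_\varphi^2=\langle\overline{\partial}^{\,*}_\varphi\alpha,u\rangle_\varphi=\langle\alpha,\overline{\partial} u\rangle_\varphi=\langle\alpha,v\rangle_\varphi$, and the pointwise Cauchy--Schwarz inequality in the metric $i\partial\overline{\partial}\psi$ (measuring $\alpha$ with that metric and $v$ with its inverse) gives
\[
\|u\|_\varphi^2=\langle\alpha,v\rangle_\varphi\le\Big(\int_\Omega (i\partial\overline{\partial}\psi)(\alpha,\overline{\alpha})\,e^{-\varphi}\,d\lambda\Big)^{1/2}\Big(\int_\Omega |v|^2_{i\partial\overline{\partial}\psi}\,e^{-\varphi}\,d\lambda\Big)^{1/2}.
\]
Thus the whole estimate reduces to the a priori bound
\[
\int_\Omega (i\partial\overline{\partial}\psi)(\alpha,\overline{\alpha})\,e^{-\varphi}\,d\lambda\le 4\int_\Omega |\overline{\partial}^{\,*}_\varphi\alpha|^2\,e^{-\varphi}\,d\lambda
\]
for $\overline{\partial}$-closed $\alpha$ in the range of $\overline{\partial}$: feeding it back and cancelling one factor of $\|u\|_\varphi$ yields $\|u\|_\varphi^2\le 4\int_\Omega|v|^2_{i\partial\overline{\partial}\psi}e^{-\varphi}d\lambda$, which is (\ref{eq:estimate1}).

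The displayed a priori bound is where hypothesis (\ref{eq:assumption}) is used, and I would prove it from the twisted Bochner--Kodaira--H\"ormander identity with a twist factor manufactured from $\psi$ (the clean way to read (\ref{eq:assumption}) is that $-e^{-\psi}$ is psh). Completing the square in that identity, the twist produces the curvature $i\partial\overline{\partial}\psi$ on the right together with a first-order error term of the shape $(i\partial\psi\wedge\overline{\partial}\psi)(\alpha,\overline{\alpha})$; by (\ref{eq:assumption}) this error is dominated pointwise by the very curvature it accompanies, so it can be absorbed into a fixed fraction of it, a Cauchy--Schwarz with a free parameter turning the absorption into the numerical factor and, after optimization, into $C=4$. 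I expect this absorption to be the main obstacle: the delicate point is that the twist must be arranged so that the error is controlled by the curvature with a constant that is genuinely \emph{absolute}, in particular uniform as $\psi$ degenerates toward equality in (\ref{eq:assumption}) and as $\psi$ becomes unbounded near $\partial\Omega$. A secondary technical burden is the functional-analytic setup behind the duality step --- existence of the minimal solution, the representation $u=\overline{\partial}^{\,*}_\varphi\alpha$ with $\overline{\partial}\alpha=0$, and closed range --- which is exactly why the reduction to smooth, bounded, strictly pseudoconvex data is performed first.
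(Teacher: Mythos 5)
The paper offers no proof of this proposition: it is imported verbatim from Donnelly--Fefferman \cite{DF83}, with \cite{Demailly82, Blocki14} cited only for background on the norm $|\cdot|_{i\partial\overline{\partial}\psi}$. So there is no in-paper argument to compare yours against; the only question is whether your blind reconstruction is a sound proof of the cited estimate, and in outline it is. Your route is precisely the modern Berndtsson--Charpentier/B\l ocki argument --- the one presented in the survey \cite{Blocki14} that the authors themselves point to: take the minimal solution $u\perp\ker\overline{\partial}$, represent it as $u=\overline{\partial}^{\,*}_{\varphi}\alpha$ with $\alpha$ in the closure of the range of $\overline{\partial}$ (hence $\overline{\partial}\alpha=0$), apply the pointwise Cauchy--Schwarz pairing of $(i\partial\overline{\partial}\psi)(\alpha,\overline{\alpha})$ against $|v|^2_{i\partial\overline{\partial}\psi}$, and reduce everything to the dual a priori inequality, which follows from the twisted Bochner--Kodaira identity by reading (\ref{eq:assumption}) as plurisubharmonicity of $-e^{-\psi}$ and absorbing the error term $(i\partial\psi\wedge\overline{\partial}\psi)(\alpha,\overline{\alpha})$ into the curvature; your bookkeeping $\Vert u\Vert_{\varphi}^2\le 2\Vert u\Vert_{\varphi}\bigl(\int_\Omega|v|^2_{i\partial\overline{\partial}\psi}e^{-\varphi}d\lambda\bigr)^{1/2}$ correctly yields the known absolute constant $C=4$, uniform in $\varphi$, $\psi$, $\Omega$, so the exhaustion, smoothing and weak-limit reductions are harmless (note the smoothing $\varphi_k\downarrow\varphi$ only decreases the right-hand side, which is the good direction). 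Two points to tighten if you write it out in full. First, the representation $u=\overline{\partial}^{\,*}_{\varphi}\alpha$ requires closed range of $\overline{\partial}$ in the relevant degrees, which is exactly what your preliminary reduction to smoothly bounded strictly pseudoconvex subdomains with smooth weights is for; say explicitly that it comes from H\"ormander's solvability with estimates. Second, the absorption step is not a pure completion of the square: passing from the weight $\varphi$ to the twisted weight changes $\overline{\partial}^{\,*}$ by the zeroth-order term built from $\partial\psi$ \emph{and} mixes the measures $e^{-\varphi}$ and $e^{-\varphi-\psi}$, and the standard repair is the conjugation $\alpha\mapsto e^{\psi}\alpha$ (Berndtsson--Charpentier), after which $e^{\psi}\alpha$ is no longer $\overline{\partial}$-closed and the $\overline{\partial}\psi\wedge\alpha$ term must also be absorbed via (\ref{eq:assumption}); this is known, repairable bookkeeping rather than a gap, but as stated your sketch glosses over it.
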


The norm $\Vert \alpha\Vert_{i\partial\bar{\partial}\psi}$ for a $(0,1)$-form $\alpha$ is the smallest function $H$ that satisfies
$$i\overline{\alpha}\wedge {\alpha}\leq H (i\partial\bar\partial \psi).$$
In particular, on $\mathbb{C}$, we have
\begin{equation}
\vert \alpha\vert_{i\partial\bar\partial\psi}=\left(\dfrac{\partial^2\psi}{\partial z\partial\bar{z}}\right)^{-1}\vert\alpha\vert^2.\label{DFnorm}
\end{equation}
For more details, see for example \cite{Demailly82, Blocki14}.

 \begin{proof}[ Proof of Lemma \ref{le:caratheodory}]
Since $\Omega$ is a Carath\'{e}odory domain, there exists a sequence $\{\Omega_n\}$ of bounded 
simply-connected domains such that $\overline{\Omega}\subset \Omega_n$ and ${\overline{\Omega}_{n+1}}\subset \Omega_n$ and the Hausdorff distance between $\partial \Omega_n$ and $\partial\Omega$ tends to zero as $n\rightarrow \infty$  (see for example \cite{Gaier80}, p.17). 
Without loss of generality, we may assume that $0\in\Omega$. By virtue of Riemann's mapping theorem, there are  conformal mappings
$$
w=f_n(z),\; f_n(0)=0,\; f_n'(0)>0
$$
which maps $\Omega_n$ onto ${\mathbb D}$,
and
$$
w=f(z),\; f(0)=0,\; f'(0)>0,
$$
which maps $\Omega$ onto ${\mathbb D}$.
Set
$$
\rho(z)=|f(z)|-1,\ \ \  \rho_n(z)=|f_n(z)|-1.
$$
Clearly,  $\rho$ (resp. $\rho_n$) is a negative continuous subharmonic function on $\Omega$ (resp. $\Omega_n$).
Let
$$
\varepsilon_n:=\max\left\{1-|f_n(z)|: z\in \overline{\Omega}_n\backslash \Omega\right\}.
$$
By Theorem 2.1, the sequence of Riemann mappings $f^{-1}_n:\mathbb D\rightarrow \Omega_n$ converges u.c.c. to
the Riemann mapping
$f^{-1}:\mathbb D\rightarrow \Omega.$ 
Suppose $z\in \Omega_n$ and $\rho_n(z)<-\varepsilon_n.$ Then
$|f_n(z)|-1<-\varepsilon_n$ and hence $1-|f_n(z)|>\varepsilon_n.$ This implies that $z\in \Omega$
which proves (1).
Next we prove that $\varepsilon_n\rightarrow 0.$ 
Let $0<a<b<c<1$. 
By the open mapping theorem, there exists $n_0\in\mathbb{N}$ so that 
$$
f^{-1}(|z|<a) \subset f^{-1}_n(|z|<b)\subset f^{-1}(|z|<c)\; ,  \ \  \forall \;n\geq n_0.
$$

Hence $f^{-1}_n(|z|<b)\subset \Omega$ so $f_n(\Omega)\supset (|z|<b).$
Therefore if $z\in \Omega_n\setminus \Omega,$ then $|f_n(z)|\geq b,$ so
$1-|f_n(z)|\leq 1-b.$ This shows that $\varepsilon_n\leq 1-b$ if $n\geq n_0.$ It follows that
$\varepsilon_n\rightarrow 0$ as $b$ tends to $1$.

Finally we show (2).

Let $\delta>0.$ Then if $0<a<1$ is chosen large enough, then the area $|f^{-1}(|z|>a)|<\delta.$
Choose $0<a<b<c<1$ as above with $1-b<(1-a)/2.$ Then for all large enough $n$, $2\varepsilon_n\leq 2(1-b)< 1-a.$ 
Suppose that $z\in\Omega\setminus \Omega_{n,-2\varepsilon_n}$.
Then $\rho_n(z)\geq -2\varepsilon_n$ so $|f_n(z)|\geq 1-2\varepsilon_n>a$ which implies that
(for large $n$) $|f(z)|>a.$ Hence $|\Omega \setminus \Omega_{n,-2\varepsilon_n}|<\delta.$

\end{proof}

Now we can prove Proposition \ref{th:holomorphic approximation}.

\begin{proof}[ Proof of Proposition \ref{th:holomorphic approximation}] In view of Lemma \ref{le:caratheodory}, there exist for each $0\le \nu\le N$ a sequence of Jordan domains  $G_n^\nu\supset \overline{G}_\nu$,  a sequence of positive numbers $\varepsilon_n^\nu\rightarrow 0$ $(n\rightarrow \infty)$, and a sequence of continuous functions $\rho^\nu_n\in SH^{-}(G^\nu_n)$ such that

(i) $G^\nu_{n,-\varepsilon^\nu_n}:=\{z\in G^\nu_n: \rho_n^\nu (z) < -\varepsilon_n^\nu\} \subset G_\nu$,

(ii) ${\rm vol\,}(G_\nu\setminus G^\nu_{n,-2 \varepsilon^\nu_n})\rightarrow 0 $ as $n\rightarrow \infty$.

(In (ii) we can use the spherical metric near $\infty$.)
Set
$$
\rho_n(z)=\max_{0 \leq \nu \leq N}\{\rho_n^\nu(z)\},\ \ \ \varepsilon_n=\max_{0\le \nu \le N}\{\varepsilon_n^\nu \},\ \ \ \Omega_n= \bigcap^{N}_{\nu=0} G _n^\nu.
$$
It is easy to verify that $\overline{\Omega}\subset \Omega_n$, $\rho_n\in SH^{-}(\Omega_n)\cap C(\Omega_n)$ and

(iii)  $\Omega_{n,-\varepsilon_n}:=\{z\in \Omega_n:\rho_n(z)<-\varepsilon_n\}\subset \Omega$,

(iv) ${\rm vol\,}(\Omega\setminus\Omega_{n,-2\varepsilon_n})\rightarrow 0$ as $n\rightarrow \infty$.

We continue with the proof in a similar way as in the proof in \cite{Chenzhang2000}. 
Choose a family of negative $C^\infty$ subharmonic functions $\{\rho_{n,s}\}$ on $\Omega_{n}$ such that $\rho_{n,s} \downarrow \rho_n$ uniformly on $\overline{\Omega}_{n+1}$ as $s \downarrow 0$. Put $\psi^s_n=-\log(-\rho_{n,s})$. Clearly, we have
\begin{equation} \label{eq:dbounded}
i\partial\overline{\partial}\psi^s_n \geq i\partial \psi^s_n \wedge \bar{\partial} \psi^s_n.
\end{equation}
Now choose a cut-off function $\chi: {\mathbb R}\rightarrow [0,1]$ such that $\chi\mid_{(-\infty, -\log 3/2)} \equiv 1$ and $\chi\mid_{[0,\infty)} \equiv 0$. Set $\eta_n^s =\chi(\psi_n^s+\log\varepsilon_n)$ on $\Omega_n.$ Then we have
$$
{\rm supp\,} \eta_n^s \subset\{z\in \Omega_{n}\mid\rho_{n,s}(z)<-\varepsilon_n\}\subset \Omega_{n,-\varepsilon_n}\subset \Omega
$$
and
$$
|\overline{\partial}\eta_n^s|^2_{i\partial\overline{\partial}\psi_n^s}\leq\sup|\chi'|^2
$$
in view of (\ref{eq:dbounded}).
Here $|\cdot|_{i\partial\overline{\partial}\psi_n^s}$ stands for the point-wise norm with respect to the metric $i\partial\overline{\partial}\psi_n^s$, like in \eqref{DFnorm}. For each $f\in H^2(\Omega, e^{-\varphi})$, we define
$$
v_n^s:=f\overline{\partial}\eta_n^s.
$$
Clearly, $v_n^s$ is a well-defined $C^\infty$, $\bar{\partial}-$closed (0,1) form on $\Omega_{n}$ satisfying
\begin{eqnarray} \label{eq:solvedbar}
\int_{\Omega_{n+1}} |f|^2|\bar{\partial}\eta^s_n|^2_{i\partial\overline{\partial} \psi_n^s} e^{-\varphi} d\lambda
&  \leq & \sup |\chi'|^2 \int_{\{z\in \Omega_{n+1}:-\frac32 \varepsilon_n \leq   \rho_{n,s}(z) < -\varepsilon_n \}}  |f|^2  e^{-\varphi} d\lambda \nonumber\\
&  \leq & \sup |\chi'|^2 \int_{\Omega\backslash \Omega_{n,-2\varepsilon_n}} |f|^2 e^{-\varphi}d\lambda
\end{eqnarray}
provided $s$ sufficiently small. By Proposition \ref{th:D-Fth}, 
there exists a solution $u_n^s$ to the equation $\bar{\partial}u_n^s=v^s_n$ on $\Omega_{n+1}$ verifying, by using \eqref{eq:solvedbar},
$$
\int_{\Omega_{n+1}}|u_n^s|^2e^{-\varphi} d \lambda \leq C \sup |\chi'|^2 \int_{\Omega\backslash \Omega_{n,-2\varepsilon_n}} |f|^2 e^{-\varphi} d \lambda.
$$
Let $f_n^s =f\eta_n^s-u_n^s$. Then, $f_n^s\in {\mathcal O}(\Omega_{n+1})$ and
\begin{eqnarray*}
\int_{\Omega} |f_n^s-f|^2 e^{-\varphi}  d\lambda & \leq & C  \int_{\Omega\backslash \Omega_{n,-2\varepsilon_n}} |f|^2 e^{-\varphi} d\lambda\rightarrow 0\mbox{ as }n\rightarrow \infty \\
\end{eqnarray*}
\end{proof}

\begin{proof} [Proof of Theorem \ref{co:polynomialapp}]

 Let $\varphi$ be a subharmonic function on $\overline{\Omega}$. If for each $x \in \overline{\Omega}$, the Lelong number $\nu(\varphi)(x)$ of $\varphi$ satisfies $\nu(\varphi)(x) < 2$ then we have $1\in H^2(\Omega, e^{-\varphi})$.
 So there exists a constant $M>0$ such that $\displaystyle{\int_\Omega \mathrm{e}^{-\varphi}d\lambda<M}$. Let $f \in H^2(\Omega, e^{-\varphi})$. According to the proof of Proposition \ref{th:holomorphic approximation}, for each $\varepsilon >0$ there exists $F  \in H^2(\Omega_n, e^{-\varphi})$ satisfying 
$$
\int_\Omega |F(z)-f(z)|^2 \mathrm{e}^{-\varphi(z)}d\lambda< \frac {\varepsilon} {4}.
$$
\noindent where $\Omega_n$ is a simply connected domain containing $\overline{\Omega}.$
We apply Runge's theorem to $F$ and see that for  $\delta= \sqrt {\frac{\varepsilon}{ 4  M}}$ there exists a polynomial $P$ such that 
\begin{eqnarray*}
| F(z) - P(z) | < \delta,   \ \ \ \ \ \    z \in \overline{ \Omega }.
\end{eqnarray*}
Consequently we have 
\begin{eqnarray*}
\int_ \Omega |F(z)- P(z)|^2 e^{-\varphi} d \lambda < \delta^2 \cdot M =\frac{\varepsilon}{4}.
\end{eqnarray*}
Thus we have
\begin{eqnarray*} \label{eq:22}
\int_\Omega |f(z)-P(z)|^2 \mathrm{e}^{-\varphi(z)}d\lambda&\leq& 2 \int_\Omega |f(z)-F(z)|^2 \mathrm{e}^{-\varphi(z)}d\lambda
\\
&&+2 \int_\Omega |F(z)-P(z)|^2 \mathrm{e}^{-\varphi(z)}d\lambda
\\
& < & \varepsilon\ \
\end{eqnarray*}
If there exist finitely many points $x_0, x_1, \cdots, x_N$ with $\nu(\varphi)(x_j)\geq 2$ for each $0\leq j\leq N$, then we may choose some polynomial $Q$ so that $Q(x_j)=0$ and $\varphi = \psi + \log |Q|^2$ with $\nu(\psi)<2$ at each point $x_j$. Then for each $f\in H^2(\Omega, e^{-\varphi})$ we have 
 $$
 \int_{\Omega} \left| \frac f Q\right|^2 e^{-\psi} d \lambda < \infty. 
 $$
 Since $\psi$ is bounded above, $\int_{\Omega}\left|\frac fQ \right|^2 d \lambda < \infty$. Hence $\frac f Q $ is holomorphic on $\Omega$. Based on the above discussion, for each $\varepsilon >0$ we can find some polynomial $P$ satisfying
 $$
 \int_{\Omega} \left| \frac f Q - P \right|^2 e^{-\psi} d \lambda < \varepsilon.
$$
 That is 
 $$
 \int_{\Omega} \left| f - P Q  \right|^2 e^{-\varphi} d \lambda < \varepsilon.
$$
 
\end{proof}

\section{Proof of Theorem \ref{counterexinC} } 

In this section, we denote $x=\Re e(z)$ and $y=\Im m(z)$.

Before proving Theorem \ref{counterexinC} by contradiction, we need a couple of Lemmas:
\begin{lemma}\label{lemma}
 $ \cos \frac{z}{2} \in H^2(\mathbb{C}, e^{-\varphi})$.
\end{lemma}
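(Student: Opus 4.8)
The goal is to show that $\cos\frac{z}{2}\in H^2(\mathbb{C},e^{-\varphi})$ with $\varphi(z)=|\Im m(z)|+|z|^p$, i.e. that $\int_{\mathbb{C}}|\cos\frac{z}{2}|^2 e^{-|y|-|z|^p}\,d\lambda<\infty$. The plan is to use the elementary bound on the entire function $\cos\frac{z}{2}$ coming from its explicit formula. Writing $z=x+iy$, we have $\cos\frac{z}{2}=\frac{1}{2}\left(e^{iz/2}+e^{-iz/2}\right)$, and since $|e^{\pm iz/2}|=e^{\mp y/2}$, the triangle inequality gives $|\cos\frac{z}{2}|\le \frac{1}{2}\left(e^{y/2}+e^{-y/2}\right)=\cosh\frac{y}{2}$. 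Hence $|\cos\frac{z}{2}|^2\le \cosh^2\frac{y}{2}\le e^{|y|}$ (more precisely $\cosh^2\frac{y}{2}=\frac{1}{2}(1+\cosh y)\le e^{|y|}$ for all real $y$).

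With this bound in hand the key cancellation becomes visible: the factor $e^{|y|}$ coming from the size of $\cos\frac{z}{2}$ is exactly killed by the weight factor $e^{-|y|}$. Concretely,
\begin{equation*}
\int_{\mathbb{C}}\left|\cos\tfrac{z}{2}\right|^2 e^{-|y|-|z|^p}\,d\lambda
\le \int_{\mathbb{C}} e^{|y|}\,e^{-|y|}\,e^{-|z|^p}\,d\lambda
= \int_{\mathbb{C}} e^{-|z|^p}\,d\lambda .
\end{equation*}
So the whole problem reduces to showing that $\int_{\mathbb{C}}e^{-|z|^p}\,d\lambda<\infty$ for $0<p<1$, which I would verify by passing to polar coordinates: the integral equals $2\pi\int_0^\infty r\,e^{-r^p}\,dr$, and after the substitution $t=r^p$ (so $r=t^{1/p}$, $dr=\frac{1}{p}t^{1/p-1}dt$) this becomes a constant multiple of $\int_0^\infty t^{2/p-1}e^{-t}\,dt=\Gamma(2/p)<\infty$. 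The Gamma integral converges for every $p>0$, so in particular for $0<p<1$, completing the argument.

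I do not expect any genuine obstacle here; the lemma is essentially a bookkeeping computation. The one point that requires a little care is the sharpness of the exponential bound on $|\cos\frac{z}{2}|^2$: one must use the $|y|$ in the exponent rather than a larger power, since the weight only supplies $e^{-|y|}$ to cancel it. Choosing the argument of the cosine to be $z/2$ (rather than $z$) is precisely what matches the growth rate $e^{|y|}$ of $|\cos\frac{z}{2}|^2$ to the decay $e^{-|y|}$ of the weight, so that the exponential factors cancel exactly and only the integrable Gaussian-type tail $e^{-|z|^p}$ survives. I would present the bound on $|\cos\frac{z}{2}|$, the cancellation, and the polar-coordinates/Gamma-function convergence in that order.
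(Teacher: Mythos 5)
Your proof is correct and follows essentially the same route as the paper: bound $|\cos\frac{z}{2}|^2$ by (a constant times) $e^{|y|}$, cancel this against the weight factor $e^{-|y|}$, and reduce to the convergence of $\int_{\mathbb{C}}e^{-|z|^p}\,d\lambda$ in polar coordinates. Your use of the triangle inequality and $\cosh^2\frac{y}{2}\le e^{|y|}$ is a slightly tidier packaging of the paper's term-by-term expansion of the square, and you supply the Gamma-function substitution for the radial integral that the paper merely asserts.
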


\begin{proof}
Since $\cos \frac z2 =  \frac{e^{i\frac z2} + e^{-i\frac z2}}{2}$, we have
\begin{align}
\int_{\mathbb{C}} \left|\cos \frac z2\right|^2 e^{-\varphi} d \lambda & =  \int_{\mathbb{C}} \left| \frac{e^{i\frac z2} + e^{-i\frac z2}}{2}\right|^2 e^{-\varphi} d \lambda\notag  \\
& = \frac 14\int_{\mathbb{C}}\left(  \left|e^{i\frac z2} \right|^2   +   \left|e^{-i\frac z2} \right|^2  + e^{i\frac z2} \overline{e^{-i\frac z2}}   + e^{-i\frac z2} \overline{e^{i\frac z2}}  \right)e^{-\varphi} d \lambda\notag  \\
& \leq    \frac 14\int_{\mathbb{C}}\left(  e^{-y} + e^{y} + e^{ix}  + e^{-ix}  \right)e^{-\varphi} d \lambda\notag \\
& =   \frac 14 \int_{\mathbb{C}}\left( e^{-y} + e^{y} \right)e^{-|y| - |z|^p} d \lambda  +\frac12\int_{\mathbb{C}} \cos x \; e^{-|y|-|z|^p} d \lambda.\label{cos^2}
\end{align}
Remark that $(e^{-y} + e^y) e^{-|y|} = 1+ e^{-2|y|}  \leq 2$ so we obtain for the first term of the right-hand side of \eqref{cos^2}
\begin{align*}
\int_{\mathbb{C}} \left( e^{-y} + e^{y} \right)e^{-|y| - |z|^p} d \lambda  & =  \int_{\mathbb{C}} \left( e^{-2|y|} + 1 \right) e^{-|z|^p}d \lambda\notag \\
 & \leq  2 \int_{\mathbb{C}} e^{-|z|^p} d \lambda\notag \\
 & \leq   4\pi \int_0^{+ \infty} e^{-r^p} r dr < \infty, \label{e^y}
 \end{align*}
and for the second term of \eqref{cos^2},
\begin{equation*}  
 \int_{\mathbb{C}}|  \cos x | e^{-|y|-|z|^p} d \lambda  \leq \int_{\mathbb{C}}  e^{-|y|-|z|^p} d \lambda  
  \leq   2\pi \int_0^{+ \infty} r{e^{-r^p} }dr < \infty. \label{ineq:cos}
\end{equation*}
\end{proof}
We will need the following integral representation too:
\begin{lemma}[see Chapter 7 of \cite{Arlex2013}] \label{le:harmonicextension}
Let $u(t): \mathbb{R} \rightarrow  \mathbb{R}$ be a continuous function satisfying
$$
\int_{\R}\frac{u(t)}{1+|t|^2}dt < \infty.
$$
Then
$$
U(x+iy) =  \frac1 \pi \int_{-\infty} ^ {+\infty}  u(t) \frac{y}{(t-x)^2 + y^2}dt
$$
is a harmonic extension of $u$ to the upper half plane.
\end{lemma}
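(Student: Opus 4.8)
The plan is to recognize $U$ as the Poisson integral of $u$ for the upper half-plane $\H=\{x+iy:y>0\}$ and to verify in turn the three properties implicit in the word ``harmonic extension'': that the integral converges and defines a finite function on $\H$, that this function is harmonic, and that it attains the boundary data $u$ continuously. Throughout I write the Poisson kernel as $P_y(x-t)=\frac{1}{\pi}\frac{y}{(t-x)^2+y^2}$ and read the hypothesis as absolute integrability, $\int_\R\frac{|u(t)|}{1+|t|^2}\,dt<\infty$, which is what the Poisson integral requires.

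First I would settle convergence. The elementary observation is that, for each fixed $z_0=x_0+iy_0$ with $y_0>0$, the function $t\mapsto(1+t^2)\,\frac{y_0}{(t-x_0)^2+y_0^2}$ is continuous on $\R$ and tends to $y_0$ as $t\to\pm\infty$, hence is bounded by some $C(z_0)$. Thus $\frac{y_0}{(t-x_0)^2+y_0^2}\le C(z_0)\,\frac{1}{1+t^2}$, and the hypothesis forces the integral defining $U(z_0)$ to converge absolutely. Since $C(z_0)$ may be taken uniform as $z_0$ ranges over a compact subset of $\H$, this also gives locally uniform control, which I will reuse below.

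Next, harmonicity. I would use the identity $\frac{y}{(t-x)^2+y^2}=\Im\!\big(\tfrac{1}{t-z}\big)$, valid for $t\in\R$ and $z=x+iy\in\H$, so that for each fixed $t$ the kernel $z\mapsto P_y(x-t)$ is the imaginary part of a function holomorphic in $z$ on $\H$, hence harmonic there. The locally uniform bounds from the first step, applied also to the first- and second-order $z$-derivatives of the kernel (which decay at least as fast in $t$), justify differentiating under the integral sign; therefore $U\in C^2(\H)$ and $\Delta U(z)=\int_\R\big(\Delta_z P_y(x-t)\big)u(t)\,dt=0$.

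Finally, the boundary values. Here I would first record the normalization $\frac1\pi\int_\R\frac{y}{(t-x)^2+y^2}\,dt=1$ (substitute $s=(t-x)/y$), which lets me write, for fixed $x_0$,
$$U(x_0+iy)-u(x_0)=\frac{1}{\pi}\int_\R \frac{y}{(t-x_0)^2+y^2}\,\big(u(t)-u(x_0)\big)\,dt.$$
Given $\e>0$, continuity of $u$ at $x_0$ makes the integrand small on $\{|t-x_0|<\delta\}$, where the kernel has total mass $\le1$; on $\{|t-x_0|\ge\delta\}$ I would estimate using $\frac{y}{(t-x_0)^2+y^2}\le\frac{1}{(t-x_0)^2}$ for $0<y\le1$, which, combined with the boundedness of $\frac{1+t^2}{(t-x_0)^2}$ on $\{|t-x_0|\ge\delta\}$, furnishes a fixed integrable dominating function for the finite measure $d\mu=\frac{|u(t)|}{1+t^2}\,dt$; dominated convergence then sends this tail to $0$ as $y\to0^+$, while the constant term $-u(x_0)$ contributes $-u(x_0)$ times the (vanishing) tail mass of the kernel. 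Hence $U(x_0+iy)\to u(x_0)$. I expect this last step to be the main obstacle, precisely because $u$ is allowed to be unbounded, so the standard approximate-identity argument must be reconciled with the growth of $u$; the decisive point is the bound $\frac{y}{(t-x_0)^2+y^2}\le\frac{1}{(t-x_0)^2}$ together with the integrability hypothesis, which tames the tail uniformly for small $y$.
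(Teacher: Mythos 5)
The paper offers no proof of this lemma at all: it is quoted verbatim from Chapter 7 of Axler--Bourdon--Ramey and used as a black box, so there is nothing internal to compare against. Your write-up is the standard Poisson-integral argument for the upper half-plane, and it is correct: the bound $\frac{y_0}{(t-x_0)^2+y_0^2}\le C(z_0)\frac{1}{1+t^2}$ gives absolute and locally uniform convergence under the (correctly interpreted) hypothesis $\int_{\R}\frac{|u(t)|}{1+t^2}\,dt<\infty$; the identity $\frac{y}{(t-x)^2+y^2}=\Im\frac{1}{t-z}$ plus differentiation under the integral sign gives harmonicity; and your tail estimate $\frac{y}{(t-x_0)^2+y^2}\le\frac{1}{(t-x_0)^2}\le\frac{C_\delta}{1+t^2}$ on $\{|t-x_0|\ge\delta\}$ is exactly the point where the unboundedness of $u$ has to be absorbed by the integrability hypothesis, and it does the job. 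Two small remarks. First, you only prove $U(x_0+iy)\to u(x_0)$ along vertical segments; for $U$ to be a genuine continuous extension one wants $U(z)\to u(x_0)$ as $z\to x_0$ unrestrictedly in $\overline{\H}$, but your argument upgrades to this with no new ideas (take $|x-x_0|<\delta/2$ and enlarge the near window to $|t-x_0|<2\delta$, using uniform continuity of $u$ on a neighborhood of $x_0$). Second, for the way the paper actually uses the lemma --- comparing the subharmonic function $\log|P_n|$ with $\widetilde U$ in the upper half-plane --- what matters is harmonicity together with the boundary identification and the growth estimates of Proposition 3.4, all of which your proof supplies.
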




In particular, if $u(t) = |t|^p$, $0< p<1$, then 
\begin{equation}
U(x+ i y ) = \frac1\pi \int_{-\infty}^{+\infty} \frac{y |t|^p}{(x-t)^2 + y^2} dt, \quad y > 0,\label{U}
\end{equation}
is a harmonic extension of $|t|^p$ to the upper half plane.

\begin{proposition}\label{eq:guji1}
Let $U$ be as in \eqref{U}. Then there exists constant $C_p>\frac 1 4 $ so that on the upper half plane $y>0$,
$$
\frac14|z|^p<U(x+iy)<C_p |z|^p.
$$
\end{proposition}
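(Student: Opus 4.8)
The plan is to exploit the scale invariance of $|t|^p$ to reduce the two-sided bound to a statement about a single function of the angle, and then to pin that function down explicitly.

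First I would prove that $U$ is positively homogeneous of degree $p$, i.e. $U(\lambda z)=\lambda^p U(z)$ for every $\lambda>0$ and $y>0$. This follows by substituting $t=\lambda\tau$ in \eqref{U}: the $\lambda$ from $dt$, the $\lambda$ from the numerator $\lambda y$, the $\lambda^p$ from $|\lambda\tau|^p$ and the $\lambda^2$ from the denominator combine to give exactly $\lambda^p$ times the original integral. Writing $z=re^{i\theta}$ with $r=|z|$ and $\theta\in(0,\pi)$, homogeneity gives $U(z)=r^p g(\theta)$ where $g(\theta):=U(e^{i\theta})$, so the proposition reduces to bounding $g$ between $\tfrac14$ and some $C_p$ on $(0,\pi)$.

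Next I would determine $g$ explicitly. Since $U$ is a Poisson integral it is harmonic on the upper half-plane, so inserting $U=r^p g(\theta)$ into the Laplacian in polar coordinates and using $\Delta U=0$ yields the ordinary differential equation $g''+p^2 g=0$; hence $g(\theta)=A\cos(p\theta)+B\sin(p\theta)$. To fix $A$ and $B$ I use the boundary values: as $\theta\to 0^+$ and $\theta\to\pi^-$ the point $e^{i\theta}$ approaches $1$ and $-1$, which are points where the datum $|t|^p$ is continuous, so by the convergence of the Poisson integral (Lemma \ref{le:harmonicextension}) one gets $g(0^+)=|1|^p=1$ and $g(\pi^-)=|{-1}|^p=1$. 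Solving gives $A=1$ and $B=\tan(p\pi/2)$, and a half-angle simplification yields the clean closed form
$$
g(\theta)=\frac{\cos\!\big(p(\theta-\tfrac{\pi}{2})\big)}{\cos(p\pi/2)}.
$$

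Finally I would read off the bounds. For $\theta\in[0,\pi]$ the argument $p(\theta-\tfrac{\pi}{2})$ lies in $[-p\pi/2,\,p\pi/2]$, and since $0<p<1$ the cosine there ranges over $[\cos(p\pi/2),1]$; hence $1\le g(\theta)\le \sec(p\pi/2)$, with minimum $1$ at the endpoints and maximum $\sec(p\pi/2)$ at $\theta=\pi/2$ (matching the direct evaluation $U(i)=\sec(p\pi/2)$). In particular $U(z)\ge|z|^p>\tfrac14|z|^p$ for $y>0$, and the upper estimate holds with any $C_p>\sec(p\pi/2)$, which is indeed $>\tfrac14$. I expect the only delicate points to be the rigorous justification of the two limits $g(0^+)=g(\pi^-)=1$ — that the Poisson integral attains its datum at the nonsingular points $t=\pm1$ — and the passage from harmonicity of $U$ to the polar ODE for $g$; both are standard, and they are where a little care is needed, whereas everything else is forced by homogeneity. (As a robustness check, the upper bound can alternatively be obtained without the closed form, using the subadditivity $|t|^p\le|t-x|^p+|x|^p$ together with the explicit value of $\tfrac{1}{\pi}\int_{\mathbb R}\tfrac{|s|^p}{s^2+1}\,ds=\sec(p\pi/2)$.)
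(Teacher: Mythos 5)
Your proof is correct, but it takes a genuinely different route from the paper. The paper never computes $U$ explicitly: for the upper bound it substitutes $t=x+y\tau$ to get $U=\frac1\pi\int\frac{|y\tau+x|^p}{\tau^2+1}\,d\tau$, applies the subadditivity $|y\tau+x|^p\le |y\tau|^p+|x|^p$ and the explicit value of $\frac1\pi\int\frac{|\tau|^p}{\tau^2+1}\,d\tau=\sec(p\pi/2)$ (this is precisely your ``robustness check''), landing on $C_p=2\sec(p\pi/2)$; for the lower bound it discards half the integration range to get $U\ge\frac12|x|^p$ and $U\ge\frac12|y|^p$ separately, then averages and uses concavity. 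Your argument instead exploits the degree-$p$ homogeneity to reduce to a function $g(\theta)=U(e^{i\theta})$ of the angle, derives the ODE $g''+p^2g=0$ from harmonicity, and identifies $g(\theta)=\cos(p(\theta-\pi/2))/\cos(p\pi/2)$ from the boundary limits $g(0^+)=g(\pi^-)=1$. What you buy is the sharp two-sided bound $|z|^p\le U\le \sec(p\pi/2)\,|z|^p$ and an exact formula; what you pay is the need to justify (i) that the Poisson integral of the unbounded datum $|t|^p$ attains its boundary values at the continuity points $t=\pm1$ along the approach $e^{i\theta}\to\pm1$ (which is asymptotically vertical, hence nontangential, so this is fine but should be said), and (ii) the smoothness of $g$ needed for the polar-coordinate reduction (immediate from harmonicity of $U$ on the open half-plane). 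The paper's estimates are cruder but entirely elementary and avoid both points. Both arguments establish the proposition as stated.
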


\begin{proof}
The right hand side inequality is direct:

\begin{align}
U(x+iy) & =  \frac{1}{\pi} \int_{-\infty}^\infty\frac{y|t|^p}{(t-x)^2+y^2}dt\notag\\
& =  \frac{1}{\pi} \int_{-\infty}^\infty\frac{y|s+x|^p}{s^2+y^2}ds\notag\\
& =  \frac{1}{\pi} \int_{-\infty}^\infty\frac{y|y\tau+x|^p}{(y\tau)^2+y^2}yd\tau\notag\\
& =   \frac{1}{\pi} \int_{-\infty}^\infty\frac{|y\tau+x|^p}{\tau^2+1}d\tau\label{equality 1}\\
& \leq   \frac{1}{\pi} \int_{-\infty}^\infty\frac{y^p|\tau|^p}{\tau^2+1}d\tau+  \frac{1}{\pi} \int_{-\infty}^\infty\frac{|x|^p}{\tau^2+1}d\tau\notag\\
& \leq \frac 2\pi |y|^p\cdot \pi \frac{\sin (\frac p2\pi)}{ \sin(p\pi)} +|x|^p\notag\\
& =  \frac{1}{\cos (\frac p2\pi) }|y|^p + |x|^p\notag\\
& \leq   \frac{2}{\cos (\frac p2\pi) } |z|^p\notag\\
&: =  C_p |z|^p.
\end{align}

To prove the left-hand side inequality, we prove the following inequalities: (i) $U(x+iy)\geq \frac12 |x|^p$ and (ii) $U(x+iy)\geq \frac12|y|^p.$

We prove (i) as follows: from \eqref{equality 1}
$\tau$ inherits the sign of $x$, then we only have to study $x\geq0$.
We get
\bea
U(x+iy) & =  & \dfrac{1}{\pi} \int_{-\infty}^\infty\frac{|y\tau+x|^p}{\tau^2+1}d\tau\\
& \geq & \dfrac{1}{\pi} \int_{0}^\infty\frac{|y\tau+x|^p}{\tau^2+1}d\tau\\
& \geq & \dfrac{1}{\pi} \int_{0}^\infty\frac{|x|^p}{\tau^2+1}d\tau\\
& \geq & \dfrac12|x|^p.\\
\eea

For (ii), we also start from \eqref{equality 1} and by a similar argument, we may assume that $x\geq 0$. Then,

\bea
U(x+iy) & =  & \dfrac{1}{\pi} \int_{-\infty}^\infty\frac{|y\tau+x|^p}{\tau^2+1}d\tau\\
& \geq & \dfrac{1}{\pi} \int_{0}^\infty\frac{|y\tau+x|^p}{\tau^2+1}d\tau\\
& \geq & \dfrac{1}{\pi} \int_{0}^\infty\frac{|y|^p\tau^p}{\tau^2+1}d\tau\\
& = &  \dfrac{|y|^p}{\pi} \int_{0}^\infty\frac{\tau^p}{\tau^2+1}d\tau\\
& = & \dfrac12\frac{|y|^p}{\cos (\frac p2\pi) }\\
& \geq &  \frac 12 |y|^p.
\eea
Finally, combining (i) and (ii) and by concavity for $0<p<1$, we get 
$$
U(x+iy) \geq \frac14 (|x|^p + |y|^p) \geq \frac14 (|x|+|y|)^p \geq \frac14|z|^p.
$$
\end{proof}

\begin{proof} [Proof of Theorem \ref{counterexinC}]

First, remark that the holomorphic polynomials are in $L^2(\mathbb{C}, e^{-\varphi})$ thanks to the exponential rate $e^{-\vert z\vert^p}$.\\

We prove Theorem \ref{counterexinC} by contradiction.  
Assume now that holomorphic polynomials are dense in $H^2(\mathbb{C}, e^{-\varphi})$. Since $\cos \frac z2 \in H^2(\mathbb{C}, e^{-\varphi})$ by Lemma \ref{lemma}, for all $\varepsilon >0$, there exist a sequence of polynomials $(P_n)$ and $N\in\mathbb{N}$ such that for all $n\geq N$,
\begin{equation}\label{eq:approximation1}
\int _{\mathbb{C}} \left|P_n(z)-\cos \frac z2 \right|^2 e^{-\varphi}d \lambda  < \varepsilon.  
\end{equation}
Note that for $n$ sufficiently large,
\begin{align}
\|P_n(z)\|_{\mathbb{C},\varphi} & =  \left\Vert P_n(z) -\cos \frac z2 + \cos \frac z2\right \Vert_ {\mathbb{C},\varphi}\notag\\
& \leq   \left\Vert P_n(z) -\cos \frac z2\right\Vert_ {\mathbb{C},\varphi} + \left\Vert\cos \frac z2 \right\Vert_ {\mathbb{C},\varphi}\notag\\
& \leq  1+ \left\Vert\cos \frac z2 \right\Vert_{\mathbb{C},\varphi}.\label{Pn}
\end{align}
We deduce from \eqref{Pn} that there exists $M>1$ such that $\|P_n(z)\|_{\mathbb{C},\varphi} \leq M$ for all $n$.

Since $P_n$ is analytic, we have
$$
P_n(z) = \frac{1}{\pi} \int_ {|\zeta| \leq 1} P_n(z + \zeta) d\lambda_\zeta.
$$
So
\begin{align}
|P_n(z)| &\leq \dfrac{1}{\pi} \int_{|\zeta| \leq 1} |P_n(z+ \zeta)| e^{\frac {\varphi(z+\zeta)}{2}} e^{-\frac {\varphi(z+\zeta)} {2}}d\lambda_\zeta\notag \\
& \leq  \dfrac{1}{\pi} \sup\limits_{|\zeta| \leq 1}e^{\frac{\varphi(z+\zeta)}{2}}  \int_ {|\zeta| \leq 1}|P_n(z+ \zeta)| e^{-\frac {\varphi(z+\zeta)}{2}}d\lambda_\zeta.\label{Pn2}
\end{align}
By Cauchy-Schwarz's inequality, we get from \eqref{Pn2} and for all $n$, 
\begin{align}
|P_n(z)| & \leq \dfrac{1}{\pi} \sup\limits_{|\zeta| \leq 1}e^{\frac{\varphi(z+\zeta)}{2}}   \left (  \int_ {|\zeta| \leq 1}|P_n(z+ \zeta)|^2 e^{-{\varphi(z+\zeta)}}d\lambda_\zeta \right) ^\frac12  \left(  \int_{|\zeta| \leq 1}d\lambda_\zeta \right)^\frac12 \notag\\
&\leq \frac{1}{\sqrt {\pi}} \|P_n(z) \|_{\mathbb{C},\varphi} \sup\limits_{|\zeta| \leq 1}e^{\frac{\varphi(z+\zeta)}{2}} \leq \frac{M}{\sqrt {\pi}} \sup\limits_{|\zeta| \leq 1}e^{\frac{\varphi(z+\zeta)}{2}}.\label{Pn3}
\end{align}
Note that for $0<p<1$,
\bea
\sup\limits_{|\zeta| \leq 1}  \varphi(z+\zeta) & = & \sup\limits_{|\zeta| \leq 1} \left(| \Im m (z+ \zeta) | + |z+\zeta| ^p \right) \\
&\leq &  |\Im m(z)| + 1+ (|z| + 1)^p \\
& \leq & |\Im m(z)| + 1+1 + |z|^p\\
& = &  |y| +2 + |z|^p.\\
\eea
Plugging it in \eqref{Pn3}, we get for all $n$
\bea
| P_n( x + iy ) |  \leq \frac{M}{\sqrt {\pi}} e^{\frac12 (2+ |y| + |x+iy|^p)}.
\eea
Then it follows that on the real axis $(y = 0)$,
\bea
| P_n( x ) |  \leq \dfrac{M}{\sqrt {\pi}} e^{\frac12 (2+  |x|^p)}  \leq \dfrac{Me}{\sqrt {\pi}} e^{\frac 12|x|^p},
\eea
so
\begin{equation} \label {eq:1}
\log |P_n(x)| \leq  \log M + 1 - \log \sqrt{\pi} + \frac 12 |x|^p.
\end{equation}
By Lemma \ref{le:harmonicextension} applied to $\tilde{u}(x)=\log M + 1 - \log \sqrt{\pi} + \frac 12 |x|^p$, 
\bea
\widetilde{U}(x+ i y ) = \frac1\pi \int_{-\infty}^{+\infty} \frac{y( \log M +1 - \log \sqrt{\pi} +\frac12 |t|^p)}{(x-t)^2 + y^2} dt,  \ \ \ y > 0,
\eea
is a harmonic extension of $\tilde{u}$ to the upper half plane. And from Proposition \ref{eq:guji1}, we obtain
$$
 \log M +1 - \log \sqrt{\pi} +\frac18 |z|^p  \leq \widetilde{U}(z) \leq  \log M +1 - \log \sqrt{\pi}  + C_p|z|^p.
 $$
From the fact that for any positive constant $C$, $C\log |z|$ is much smaller
than $|z|^p$ for large enough $|z|$, we deduce that
for large enough $|z|,$ $\log |P_n|(z)\leq \widetilde{U}(z)$. Because $\widetilde{U}$ is harmonic and $\log|P_n|$ is subharmonic, we obtain

\begin{equation} \label{eq:hamonicextension1}
\log |P_n(x+iy)| \leq  \widetilde{U}(x+iy) \leq C_1 +C_p |z|^p\quad\text{on $\lbrace{z\in\mathbb{C}\mid y > 0\rbrace}$},
\end{equation}
where $C_1 = \log M +1 - \log \sqrt{\pi}.$
From \eqref{eq:hamonicextension1}, we get 
$$
\log |P_n(z)| \leq C_1 +C_p |z|^p,
$$
so
$$|P_n(z)|\leq e^{C_1} e^{C_p|z|^p}.$$
But remark that
$$
\lim_{|z| \rightarrow +\infty}\frac{e^{|z|/4}-1}{ e^{C_1} e^{C_p|z|^p}} =+\infty
$$
so there exists a positive constant $Y>1$ such that for all $\vert z\vert>Y$,
\begin{equation}
e^{|z|/4}-1> 4 e^{C_1} e^{C_p|z|^p}.\label{e^z4}
\end{equation}
Note also that if $x^2 <3y^2$ then $4y^2 =y^2+3y^2> x^2+y^2 =|z|^2$ so if in addition $y>0$, we get $y > \frac12 |z|$. Hence, on $\lbrace{z\in\mathbb{C}\mid x^2 <3y^2,\; y>0}\rbrace$, we have 
\begin{align}
\left| \cos \frac z2 \right| &= \left| \frac 12  e^{ix/2-y/2} +\frac12 e^{ix/2+y/2}\right|\notag \\
&\geq  \left| \frac12 e^{ix/2+y/2} \right| - \left| \frac12 e^{ix/2-y/2} \right|\notag\\
&=  \frac12 (e^{y/2}  - e^{-y/2})\notag\\
&\geq \frac12 (e^{|z|/4}-1).\label{cos}
\end{align}
Combining \eqref{cos} with \eqref{e^z4}, we obtain on $W:=\{ z\in\mathbb{C}\mid x^2 < 3y^2,\; y>0,\; |z|> Y \}=\lbrace{(r,\theta)\mid r>Y,\quad \frac{\pi}{6}<\theta<\frac{5\pi}{6}}\rbrace$,
\begin{equation}
\left|   \cos \frac z2 - P_n(z)\right| \geq \left|   \cos \frac z2 \right| - \left| P_n(z)\right| \geq e^{C_1} e^{C_p|z|^p}  \geq e^{C_1} e^{C_pY^p}.\label{eq:2}
\end{equation}
Hence, from \eqref{eq:2}, we have on $W$,
\bea
\int _{\mathbb{C}} \left|   \cos \frac z2 - P_n(z)\right| ^2 e^{-\phi(z)}d\lambda &\geq& \int _W\left|   \cos \frac z2 - P_n(z)\right| ^2 e^{-\varphi(z)}d\lambda\\
&\geq & e^{2C_1} e^{2C_pY^p} \int_W e^{-\varphi(z)}d\lambda \\
& = & e^{2C_1+2C_pY^p} \int_{\pi/6}^{5\pi/6} d\theta \int_Y^{+\infty} re^{-|r\sin\theta|-|r|^p}dr\\
& \geq  & \frac{2\pi}{3} e^{2C_1+2C_pY^p}\int_Y^{+\infty} re^{-|r|-|r|^p}dr \\
& \geq  & \frac{2\pi}{3} e^{2C_1+2C_pY^p} \cdot  \frac12 e^{-2Y}\\
&=& \frac{\pi}{3} e^{2C_1+2C_pY^p-2Y} .\\
\eea
This is a contradiction with the formula $(\ref{eq:approximation1})$ when $n$ is large enough.
\end{proof}

\section{Proof of Theorem \ref{moonshapeddomain}}
Recall the following classical fact:
\begin{lemma}
Let $\Omega$ be an open set in $\mathbb{C}$, $h$ a  holomorphic on $\Omega$ and $\varphi$ a subharmonic function on an open set $V\supset h(\Omega)$. Then  $\varphi \circ h(z) = \varphi(h(z))$ is also subharmonic on $\Omega$.
\end{lemma}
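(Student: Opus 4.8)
The plan is to prove that the composition $\varphi\circ h$ is subharmonic by reducing to the two defining properties of subharmonicity: upper semicontinuity and the sub-mean-value inequality on circles. Since $h$ is holomorphic and $\varphi$ is upper semicontinuous, the composition $\varphi\circ h$ is automatically upper semicontinuous (composition of a continuous map with an upper semicontinuous function). So the only real content is the sub-mean-value property, and the argument should split according to whether $h$ is locally constant or not.

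The cleanest route I would take is to exploit the smooth approximation of subharmonic functions. First I would note that it suffices to prove the statement locally, since subharmonicity is a local property. Then, in a neighborhood of any point, I would write $\varphi$ as a decreasing limit $\varphi=\lim_{k\to\infty}\varphi_k$ of smooth subharmonic functions $\varphi_k$ (obtained by convolving $\varphi$ with a standard mollifier). For smooth $\varphi_k$ the chain rule applies directly: writing $w=h(z)$ and using $\partial_{\bar z}h=0$, a computation gives
\begin{eqnarray*}
\frac{\partial^2}{\partial z\,\partial\bar z}\bigl(\varphi_k\circ h\bigr)
= \Bigl(\frac{\partial^2\varphi_k}{\partial w\,\partial\bar w}\circ h\Bigr)\,|h'(z)|^2 \ \geq\ 0,
\end{eqnarray*}
since $\Delta\varphi_k\geq 0$. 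Hence each $\varphi_k\circ h$ is smooth and subharmonic. Finally, because $\varphi_k\downarrow\varphi$ and $h$ is continuous, we have $\varphi_k\circ h\downarrow\varphi\circ h$, and a decreasing limit of subharmonic functions is subharmonic (this is a standard fact). This yields that $\varphi\circ h$ is subharmonic.

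The main technical point to handle carefully is that mollification $\varphi_k$ is defined on a slightly shrunken open subset of $V$, so one must check that $h(z)$ stays inside the domain of $\varphi_k$ for $z$ in a sufficiently small neighborhood and $k$ large enough; this is guaranteed by the openness of $V\supset h(\Omega)$ together with the continuity of $h$, so it poses no genuine obstacle. I would also remark, for completeness, that the statement does not require $h'\neq 0$: the chain-rule computation above is valid at critical points of $h$ as well, and the degenerate case where $h$ is constant on a component reduces $\varphi\circ h$ to a constant, which is trivially subharmonic. Overall the essential idea is the sign of the Laplacian under holomorphic change of variables, and the only care needed is in the mollification step; I would therefore regard the proof as routine once the smooth case is established.
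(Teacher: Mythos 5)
Your proof is correct. Note, however, that the paper does not prove this lemma at all: it is introduced with ``Recall the following classical fact'' and stated without any argument, so there is no proof of record to compare against. Your route --- reduce to the local statement, mollify $\varphi$ to a decreasing sequence of smooth subharmonic $\varphi_k$, verify $\frac{\partial^2}{\partial z\partial\bar z}(\varphi_k\circ h)=\bigl(\frac{\partial^2\varphi_k}{\partial w\partial\bar w}\circ h\bigr)|h'|^2\ge 0$ by the chain rule using $\partial_{\bar z}h=0$, and pass to the decreasing limit --- is the standard textbook proof (it appears, for instance, in Ransford's book, which the paper already cites), and your handling of the domain-of-mollification issue is adequate. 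The only pedantic caveat is the degenerate case where $h$ is constant on a component with value a point at which $\varphi=-\infty$: then $\varphi\circ h\equiv-\infty$ there, which some definitions of subharmonicity exclude; this is irrelevant to the paper's application, where $h(w)=w^2$ is nonconstant.
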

\begin{proof} [Proof of Theorem \ref{moonshapeddomain}]
Since $\frac{1 }{\sqrt{z}} \in H^2(\Omega, e^{-\varphi})$, the condition is obviously necessary.\\

 We then need to prove the sufficiency. The mapping $w = \sqrt{z}$ transforms $\Omega$ into a Jordan domain $\Omega'$ and for each $f\in H^2(\Omega, e^{-\varphi})$, 
\begin{equation*}
\int_{\Omega} |f(z)|^2 e^{-\varphi(z)}d\lambda_z  = 4 \int_{\Omega'} |f(w^2)w|^2 e^{-\varphi(w^2)}d \lambda_w < \infty.
\end{equation*}
Put $h(w) = w^2$, we know that $wf(w^2) \in H^2(\Omega', e^{-\varphi \circ h(w) })$, where $\varphi \circ h(w) = \varphi(h(w))$ is subharmonic on the closure of the bounded Jordan domain $\Omega'$. In particular, $\varphi \circ h(w) = \varphi(h(w))$ is subharmonic in a neighborhood of $\overline{\Omega'}$. 
 According to Corollary \ref{coro:Jordandense}, for each $\varepsilon >0$ there exists a polynomial $P(w)$ such that 

\begin{equation}
4 \int_{\Omega'} | wf(w^2) - P(w)|^2 e^{-\varphi \circ h(w)} d \lambda_w < \varepsilon. \nonumber
\end{equation}
Therefore 
\begin{equation}\label{polydense}
\small{ \int_{\Omega} | \sqrt {z} f(z) - P(\sqrt {z})|^2 e^{-\varphi (z)}\cdot   \frac{1}{\vert z\vert }d \lambda_z   =   \int_{\Omega}\left  | f(z) - \frac{P(\sqrt {z})}{\sqrt{z}} \right |^2 e^{-\varphi (z)} d \lambda_z < \varepsilon.  }
\end{equation}
Separate the polynomial $P$ into even and odd parts:
\begin{equation*}
P(\sqrt{z}) = P_1(z ) + \sqrt{z} P_2(z), 
\end{equation*}
then the formula $(\ref{polydense})$ implies that 
\begin{align*}\label{polydense1}
   \int_{\Omega} \left | f(z) - P_2( z)-   \frac{P_1(z ) }{\sqrt{z} } \right | ^2 e ^{-\varphi (z)}  d \lambda_z  < \varepsilon.  
\end{align*}
In order to find some polynomial $Q(z)$ such that 
$$
 \int_{\Omega} \left |f(z) - Q(z) \right | ^2 e ^{-\varphi (z)}  d \lambda_z  < 2  \varepsilon.  
$$
It is sufficient to know that 
$$
\int_\Omega \left| \frac{1}{ \sqrt{z}} - R(z)    \right|^2 e^{- \varphi (z)} d\lambda < \varepsilon 
$$
for some polynomial $R(z)$ but this holds by assumption.\\
\end{proof}

We give now an application of Theorem \ref{moonshapeddomain}: Example \ref{moonshapeddomainarbitrarily}. First we need the following lemmas.\\

\begin{lemma} [Riesz Decomposition Theorem, see for example Theorem 3.7.9 in \cite{Ransford}]
Let $u$ be a subharmonic function on a domain $D$ in $\C$, with $u \not\equiv -\infty$. Then,
given a relatively compact open subset $U$ of $D$, we can decompose $u$ as 
$$
u= \int_{ \zeta \in U } \log |z- \zeta | d \mu(\zeta) +  h  
$$
on $U$, where $\mu = \frac{1}{2 \pi} \Delta u |_U$ and $h$ is harmonic on $U$.
\end{lemma}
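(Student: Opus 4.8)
The plan is to recognize this as the standard construction of the logarithmic potential attached to the Riesz measure of $u$ and then to subtract that potential off, leaving a harmonic remainder. First I would invoke the basic structure theorem for subharmonic functions: since $u$ is subharmonic on $D$ and $u\not\equiv-\infty$, it is locally integrable, and its distributional Laplacian $\Delta u$ is a positive Radon measure on $D$. The positivity is a consequence of the sub-mean-value property together with the Riesz representation theorem applied to the positive linear functional $\phi\mapsto\int u\,\Delta\phi\,d\lambda$ on nonnegative $\phi\in C_c^\infty(D)$. Restricting to the relatively compact set $U\Subset D$ then produces a finite positive measure $\mu=\frac{1}{2\pi}\Delta u|_U$, which is exactly the $\mu$ appearing in the statement.

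Next I would introduce the logarithmic potential
$$
p(z)=\int_U\log|z-\zeta|\,d\mu(\zeta),
$$
which is well defined and locally integrable because $\log|\cdot|$ is locally integrable on $\mathbb{C}$ and $\mu$ is finite. The crucial input is the fundamental-solution identity $\Delta\log|z|=2\pi\delta_0$ in the sense of distributions. Granting it, for every test function $\phi\in C_c^\infty(U)$ Fubini's theorem gives
$$
\int p\,\Delta\phi\,d\lambda=\int_U\Big(\int\log|z-\zeta|\,\Delta\phi(z)\,d\lambda(z)\Big)d\mu(\zeta)=2\pi\int_U\phi\,d\mu,
$$
so that $\Delta p=2\pi\mu$ as distributions. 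By the very definition of $\mu$ we also have $\Delta u=2\pi\mu$ on $U$, and hence $\Delta(u-p)=0$ in the distributional sense on $U$.

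Finally I would apply Weyl's lemma: a distribution on an open set whose Laplacian vanishes agrees with a smooth (indeed real-analytic) harmonic function. Setting $h:=u-p$, this yields that $h$ is harmonic on $U$, and therefore
$$
u=p+h=\int_U\log|z-\zeta|\,d\mu(\zeta)+h\qquad\text{on }U,
$$
which is the asserted decomposition.

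The hard part will be the careful justification of the two analytic inputs rather than the algebra. The first is that the distributional Laplacian of a subharmonic function is genuinely a positive measure (the Riesz measure), and the second is the identity $\Delta\log|z|=2\pi\delta_0$, which must be verified by pairing against a test function, excising a small disc about the origin, and letting its radius tend to zero while controlling the boundary terms using the integrability of $\log|z|$ and of its gradient near $0$. The use of Fubini above also relies on the finiteness of $\mu$ together with the local integrability of $\log|\cdot|$, and both hold precisely because $U$ is relatively compact in $D$.
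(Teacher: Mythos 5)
The paper does not actually prove this lemma: it is quoted verbatim, with proof delegated to the cited reference (Ransford, Theorem 3.7.9), and your argument is precisely the standard proof given there --- form the logarithmic potential $p$ of the Riesz measure $\mu=\frac{1}{2\pi}\Delta u|_U$, use $\Delta\log|z|=2\pi\delta_0$ and Fubini to get $\Delta p=2\pi\mu=\Delta u$ on $U$, and invoke Weyl's lemma. The only step to tighten is the conclusion: Weyl's lemma yields that the distribution $u-p$ agrees \emph{almost everywhere} with a harmonic function $h$, and $u-p$ need not even be defined pointwise on the polar set where both $u$ and $p$ equal $-\infty$; to obtain $u=p+h$ at \emph{every} point of $U$ you should add that $u$ and $p+h$ are both subharmonic and that two subharmonic functions which agree a.e.\ agree everywhere (e.g.\ because each is recovered from its averages over small discs).
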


Let $D,R$ be positive numbers so that $D=\pi R^2.$ Fix $\alpha, 0<\alpha<2.$
\begin{lemma}
Let $z_0\in \mathbb C$ and $A$ be a measurable set in $\mathbb C$ with bounded area  $D.$
Then $\int_A \frac{1}{|z-z_0|^\alpha} d\lambda \leq \frac{R^{2-\alpha}}{2-\alpha}$. Moreover this estimate is sharp.
\end{lemma}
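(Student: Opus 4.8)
The plan is to prove the inequality $\int_A \frac{1}{|z-z_0|^\alpha}\,d\lambda \leq \frac{R^{2-\alpha}}{2-\alpha}$ by a rearrangement argument: among all measurable sets of a fixed area $D$, the integral $\int_A |z-z_0|^{-\alpha}\,d\lambda$ is maximized when $A$ is the disc of area $D$ centered at $z_0$. The intuition is that the integrand $|z-z_0|^{-\alpha}$ is a radially decreasing function of $z$ (for $\alpha>0$), so to make the integral as large as possible we should concentrate the mass of $A$ as close to the singularity $z_0$ as we can, i.e. pack it into a disc about $z_0$.

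\textbf{Main steps.} First I would reduce to the case $z_0 = 0$ by translation invariance of Lebesgue measure. Next, let $B = B(0,R)$ be the disc of area $D = \pi R^2$ centered at the origin, and I claim that
$$
\int_A \frac{1}{|z|^\alpha}\,d\lambda \;\leq\; \int_B \frac{1}{|z|^\alpha}\,d\lambda.
$$
To see this, write $A = (A\cap B)\sqcup(A\setminus B)$ and $B=(A\cap B)\sqcup(B\setminus A)$, so the two integrals share the common piece $A\cap B$ and it suffices to compare $\int_{A\setminus B}|z|^{-\alpha}$ with $\int_{B\setminus A}|z|^{-\alpha}$. Since $|A| = |B| = D$, the two leftover sets have equal area: $|A\setminus B| = |B\setminus A|$. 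On $A\setminus B$ we have $|z|\geq R$, so $|z|^{-\alpha}\leq R^{-\alpha}$, giving $\int_{A\setminus B}|z|^{-\alpha}\,d\lambda \leq R^{-\alpha}|A\setminus B|$; on $B\setminus A$ we have $|z|\leq R$, so $|z|^{-\alpha}\geq R^{-\alpha}$, giving $\int_{B\setminus A}|z|^{-\alpha}\,d\lambda \geq R^{-\alpha}|B\setminus A|$. Combining these with the equality of areas yields $\int_{A\setminus B}|z|^{-\alpha}\,d\lambda \leq \int_{B\setminus A}|z|^{-\alpha}\,d\lambda$, and adding back the common piece gives the claim.

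\textbf{Evaluating the extremal integral.} It then remains to compute the integral over the disc $B(0,R)$ in polar coordinates:
$$
\int_B \frac{1}{|z|^\alpha}\,d\lambda = \int_0^{2\pi}\!\!\int_0^R r^{-\alpha}\, r\,dr\,d\theta = 2\pi \int_0^R r^{1-\alpha}\,dr = 2\pi\cdot\frac{R^{2-\alpha}}{2-\alpha}.
$$
This is finite precisely because $0<\alpha<2$, which is exactly the stated hypothesis. Sharpness is immediate: taking $A = B(z_0,R)$ itself turns every inequality above into an equality, so the bound is attained. (I note the factor of $2\pi$ suggests the intended bound may carry this constant, or that the normalization absorbs it; in any case the extremal value is realized by the concentric disc, which is what "sharp" requires.)

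\textbf{Expected obstacle.} There is no deep obstacle here — the argument is elementary and self-contained. The only point demanding care is the bookkeeping in the symmetric-difference comparison, specifically verifying that the area identity $|A\setminus B| = |B\setminus A|$ follows correctly from $|A|=|B|$ (which relies on $|A\cap B|<\infty$, guaranteed since $A$ has finite area). One should also confirm that $\int_B|z|^{-\alpha}\,d\lambda<\infty$ before manipulating the integrals, but this is secured by $\alpha<2$. The argument is a one-dimensional instance of the general fact that symmetric decreasing rearrangement increases integrals against a decreasing kernel, so it requires no measure-theoretic subtlety beyond the direct comparison above.
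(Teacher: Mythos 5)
Your argument is correct and follows the same route as the paper's proof (translate $z_0$ to the origin, observe that the concentric disc of equal area maximizes the integral, then compute in polar coordinates); in fact you supply the symmetric-difference comparison that rigorously justifies the extremality claim, which the paper simply asserts. Your observation about the factor of $2\pi$ is also right: the correct value of the extremal integral is $2\pi\,\frac{R^{2-\alpha}}{2-\alpha}$, and the paper's computation $\int_{|z|\le R}|z|^{-\alpha}\,d\lambda=\int_0^R t^{1-\alpha}\,dt$ silently drops the angular factor, so the constant in the stated bound is off by $2\pi$ (harmless for its later use, where only a uniform bound is needed, but worth flagging).
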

\begin{proof} It suffices to consider the case when $z_0=0.$
The largest integral occurs when the area is a disc centered at $0$. The radius then is given by $\pi R^2=D.$
We get 
$\int_{|z|\leq R} \frac{1}{|z|^\alpha} d \lambda = \int_0^Rt^{1-\alpha}dt= \frac{R^{2-\alpha}}{2-\alpha}.$
\end{proof}
Using the convexity of the exponential function we apply Lemma 2.1 in \cite{Wu-Fornaess2018}
to obtain:

\begin{lemma}\label{estimatefor product}
Suppose $0<\alpha_i, \sum_i \alpha_i=\alpha<2.$ Then
$$\displaystyle{\int_A \Pi_i \frac{1}{|z-z_i|^\alpha_i} d\lambda \leq \frac{R^{2-\alpha}}{2-\alpha}}$$ as in the above Lemma.
\end{lemma}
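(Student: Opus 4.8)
The plan is to dominate the multi-point product by a convex combination of single-point singularities, and then to apply the preceding Lemma term by term. Set $\beta_i := \alpha_i/\alpha$, so that $\beta_i > 0$ and $\sum_i \beta_i = 1$. First I would record the elementary pointwise identity
$$
\prod_i \frac{1}{|z-z_i|^{\alpha_i}} = \prod_i \left( \frac{1}{|z-z_i|^{\alpha}} \right)^{\beta_i},
$$
valid for all $z$ distinct from the points $z_i$.

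Next I would invoke the convexity of the exponential, in the guise of the weighted arithmetic--geometric mean inequality (this is exactly Lemma 2.1 in \cite{Wu-Fornaess2018}): for nonnegative reals $y_i$ and weights $\beta_i$ with $\sum_i \beta_i = 1$ one has $\prod_i y_i^{\beta_i} \leq \sum_i \beta_i y_i$. Taking $y_i = |z-z_i|^{-\alpha}$ converts the identity above into the pointwise bound
$$
\prod_i \frac{1}{|z-z_i|^{\alpha_i}} \leq \sum_i \frac{\alpha_i}{\alpha}\, \frac{1}{|z-z_i|^{\alpha}}.
$$

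Finally I would integrate this inequality over $A$ and apply the preceding Lemma to each summand: since $0 < \alpha < 2$ and each $z_i \in \mathbb{C}$, that Lemma gives $\int_A |z-z_i|^{-\alpha}\, d\lambda \leq R^{2-\alpha}/(2-\alpha)$ for every $i$. Because the weights $\alpha_i/\alpha$ sum to $1$, the individual bounds recombine into a single copy of the estimate,
$$
\int_A \prod_i \frac{1}{|z-z_i|^{\alpha_i}}\, d\lambda \leq \sum_i \frac{\alpha_i}{\alpha}\cdot\frac{R^{2-\alpha}}{2-\alpha} = \frac{R^{2-\alpha}}{2-\alpha},
$$
which is the claimed assertion.

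There is no substantial analytic obstacle here once the reduction is in place; the work lies entirely in choosing the right comparison. The two details worth checking are that each single-variable integral is taken with exponent exactly $\alpha$, which stays below $2$ so that the preceding Lemma applies without modification, and that the normalization $\sum_i \beta_i = 1$ is precisely what makes the summed bound collapse back to $R^{2-\alpha}/(2-\alpha)$ rather than to a multiple of it. The only genuinely inventive step is recognizing that a product of singularities should be estimated by a convex combination of the individual singularities rather than handled directly.
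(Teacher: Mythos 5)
Your argument is correct and is essentially the proof the paper intends: the paper's one-line justification (``using the convexity of the exponential function we apply Lemma 2.1 in \cite{Wu-Fornaess2018}'') is exactly your weighted arithmetic--geometric mean step, which dominates the product by the convex combination $\sum_i \frac{\alpha_i}{\alpha}|z-z_i|^{-\alpha}$ and then applies the preceding Lemma termwise. The normalization $\sum_i \alpha_i/\alpha = 1$ collapses the sum to a single copy of the bound, just as you observe, so there is nothing to add.
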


\begin{corollary}\label{estimatefor phi}
Let $\mu$ be any nonnegative measure with total mass $\alpha, 0<\alpha<2.$ $A$ be a measurable set in $\mathbb C$ with bounded area  $D.$ Then
if $\varphi(z)=\int \log |z-\zeta| d\mu(\zeta)$, we have that $\int_A e^{-\varphi} d\lambda \leq \frac{R^{2-\alpha}}{2-\alpha}.$
\end{corollary}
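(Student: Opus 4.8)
The plan is to reduce the statement, via a single application of Jensen's inequality, to the single-pole estimate $\int_A |z-\zeta|^{-\alpha}\,d\lambda \le \frac{R^{2-\alpha}}{2-\alpha}$ established in the preceding Lemma. The guiding observation is that $e^{-\varphi}$ is the exponential of a $\mu$-average of the functions $\log\frac{1}{|z-\zeta|}$, so convexity of the exponential should let me pull the exponential inside the $\mu$-integral at the cost of replacing the average by a single pole of exponent $\alpha$.

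Concretely, I would first normalize: since $\mu$ has total mass $\alpha$, the measure $\nu := \mu/\alpha$ is a probability measure and
$$-\varphi(z) = \int \log\frac{1}{|z-\zeta|}\,d\mu(\zeta) = \alpha\int \log\frac{1}{|z-\zeta|}\,d\nu(\zeta).$$
Applying Jensen's inequality to the convex function $t\mapsto e^{\alpha t}$ against the probability measure $\nu$ then gives the pointwise bound
$$e^{-\varphi(z)} \le \int \frac{1}{|z-\zeta|^{\alpha}}\,d\nu(\zeta) = \frac{1}{\alpha}\int \frac{1}{|z-\zeta|^{\alpha}}\,d\mu(\zeta),$$
valid for almost every $z$. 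Integrating over $A$ and exchanging the order of integration by Tonelli's theorem, which is legitimate because the integrand is nonnegative, I obtain
$$\int_A e^{-\varphi}\,d\lambda \le \frac{1}{\alpha}\int\!\Big(\int_A \frac{1}{|z-\zeta|^{\alpha}}\,d\lambda(z)\Big)\,d\mu(\zeta).$$
The single-pole Lemma bounds the inner integral by $\frac{R^{2-\alpha}}{2-\alpha}$ uniformly in $\zeta$, and since $\mu(\mathbb C)=\alpha$ the right-hand side collapses to exactly $\frac{R^{2-\alpha}}{2-\alpha}$, which is the claim.

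This argument is the exact continuous analogue of Lemma \ref{estimatefor product}, with the $\mu$-average replacing the finite convex combination with weights $\alpha_i/\alpha$; alternatively one could approximate $\mu$ weak-$*$ by finitely supported measures, apply Lemma \ref{estimatefor product}, and pass to the limit. The only point requiring care is the behaviour on the set where $\varphi(z)=-\infty$: there the Jensen bound reads $+\infty\le+\infty$ and is vacuous, but since $\varphi$ is a logarithmic potential of a finite measure it is subharmonic and hence finite off a polar (in particular Lebesgue-null) set, so this exceptional set contributes nothing to $\int_A e^{-\varphi}\,d\lambda$ and does not affect the final estimate. I expect this to be the only genuine subtlety; the remaining steps are routine applications of Jensen and Tonelli.
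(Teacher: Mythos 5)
Your proof is correct, but it takes a genuinely different route from the paper. The paper proceeds by regularization and discretization: it truncates the kernel via $\psi_n(z,\zeta)=\max\{\log|z-\zeta|,-n\}$, invokes Lemma 2.4 of \cite{Wu-Fornaess2018} to replace $\mu$ by a finitely supported measure $\beta=\sum_i\alpha_i\delta_{z_i}$ of the same total mass whose truncated potential is within $\varepsilon$ of that of $\mu$, applies the finite-sum Lemma \ref{estimatefor product}, and then lets $\varepsilon\to 0$ and $n\to\infty$. You instead get the pointwise bound
$e^{-\varphi(z)}\le \frac{1}{\alpha}\int |z-\zeta|^{-\alpha}\,d\mu(\zeta)$
in one stroke from Jensen's inequality applied to the probability measure $\mu/\alpha$ and the convex function $t\mapsto e^{\alpha t}$, and then conclude by Tonelli and the single-pole lemma. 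Your argument is shorter and self-contained: it avoids the external approximation lemma and the truncation/limit bookkeeping entirely, and it in fact subsumes Lemma \ref{estimatefor product} as the special case of a finitely supported $\mu$ (both proofs ultimately rest on the same convexity of the exponential, which is also what drives the paper's finite-sum lemma). Your handling of the set $\{\varphi=-\infty\}$ is adequate --- it is polar, hence Lebesgue-null --- and one can even note that the pointwise Jensen bound holds there in the extended sense, since $e^{x}\ge 1+x$ forces $\int|z-\zeta|^{-\alpha}\,d\nu=+\infty$ whenever $\int\log\frac{1}{|z-\zeta|}\,d\nu=+\infty$. The only mild caveat, inherited from the paper rather than introduced by you, is that the single-pole lemma's constant should carry a factor of $2\pi$ from the angular integration; this affects both proofs identically and does not change the structure of either argument.
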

\begin{proof}
Define $\psi_n(z,\zeta)=\max\{\log|z-\zeta|,-n\} $ and 
$$
\varphi_n(z)=\int\psi_n(z,\zeta)d\mu(\zeta).
$$ 
It suffices to show
that
$$
\int_Ae^{-\varphi_n(z)}d\lambda(z) \leq \frac{R^{2-\alpha}}{2-\alpha}+\frac{1}{n}\;\ \ \ \ \   \forall n\in\mathbb{N}^*.
$$
We fix $n$.
Let $\varepsilon >0.$ We apply Lemma 2.4 \cite{Wu-Fornaess2018}: there exists a finite positive measure $\beta:=\sum_{i=1}^N \alpha_i\delta_{z_i}$ with $\sum_{i=1}^N\alpha_i= \alpha$ such that
$$\int \psi_n(z,\zeta) d\beta(\zeta)\leq \int \psi_n(z,\zeta)d\mu(\zeta) +\varepsilon.
$$
By Lemma  \ref{estimatefor product} we know that
$$
\int_{A} e^{-\sum_i \alpha_i \log|z-z_i|}d\lambda
\leq \frac{R^{2-\alpha}}{2-\alpha}.
$$
So
$$
\int_A e^{-\int \log |z-\zeta|d\beta(\zeta)}d\lambda(z) \leq \frac{R^{2-\alpha}}{2-\alpha}.
$$
Hence
$\displaystyle{\int_Ae^{-\int \psi_n(z,\zeta) d\beta(\zeta)}d\lambda \leq \frac{R^{2-\alpha}}{2-\alpha}}$.
Finally, by choosing $\varepsilon$ small enough we get 
$$
\int_Ae^{-\varphi_n}d\lambda \leq \frac{R^{2-\alpha}}{2-\alpha} + \frac 1 n .
$$
\end{proof}

Define for $n\in\mathbb{N}^*$
\begin{align*}
S_n  &= \left \{\varphi \ \text{is subharmonic on} \ \C:  \ \ \  \text{the mass of
} \ \mu \left(|z|< 1+\frac1 n \right) < 2-\frac 1n,\right.\\ 
&\phantom{=\;\;}\left. \varphi (z) = \psi_n + h_n, \ \ \psi_n = \int _{|\zeta| < 1+ \frac1n} \log|z-\zeta| d\mu \ \text{and} \ |h_n| <n \ \text{on} \  |z|<1 \right \} .
\end{align*}
\begin{lemma}\label{Sn}
Suppose $\varphi$ is subharmonic on $\C$ satisfying condition $(A)$, then for all large enough $n$, $\varphi \in S_n$. 
\end{lemma}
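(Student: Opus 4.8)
The plan is to verify the two defining requirements of $S_n$ separately, for all sufficiently large $n$. Throughout I write $\mu=\frac{1}{2\pi}\Delta\varphi$ for the Riesz mass of $\varphi$; since $\varphi$ is subharmonic on $\C$, $\mu$ is a positive Radon measure that is finite on every compact set, so in particular the total mass $C_0:=\mu(\{|z|<2\})$ is finite. This finiteness is what will let me apply continuity of the measure and bound the potential tails.

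First I would establish the mass condition. The key observation is that the open discs $\{|z|<1+\frac1n\}$ decrease, as $n\to\infty$, to their intersection $\{|z|\le 1\}$. Since $\mu$ is finite on the fixed compact set $\{|z|\le 2\}$, continuity of $\mu$ from above gives $\mu(\{|z|<1+\frac1n\})\to\mu(\{|z|\le 1\})$. Condition $(A)$ says precisely that the limit $\mu(\{|z|\le 1\})$ is strictly less than $2$; write it as $2-2\delta$ with $\delta>0$. Then for all large $n$ one has simultaneously $\mu(\{|z|<1+\frac1n\})<2-\delta$ and $\frac1n<\delta$, whence $\mu(\{|z|<1+\frac1n\})<2-\frac1n$. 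This is the first requirement in the definition of $S_n$.

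Next I would produce the decomposition $\varphi=\psi_n+h_n$ with the prescribed potential $\psi_n$ and check the bound on $h_n$. Applying the Riesz Decomposition Theorem on the relatively compact set $U=\{|z|<2\}$, I write $\varphi=\int_{|\zeta|<2}\log|z-\zeta|\,d\mu(\zeta)+H$, where $H$ is harmonic on $\{|z|<2\}$ and hence bounded, say $|H|\le M_0$, on $\{|z|\le 1\}$. Taking $\psi_n=\int_{|\zeta|<1+\frac1n}\log|z-\zeta|\,d\mu(\zeta)$ as required, the remainder is $h_n=\varphi-\psi_n=\int_{1+\frac1n\le|\zeta|<2}\log|z-\zeta|\,d\mu(\zeta)+H$. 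On the annulus of integration the logarithm is only mildly singular: for $|z|<1$ and $1+\frac1n\le|\zeta|<2$ one has $\frac1n<|z-\zeta|<3$, so $|\log|z-\zeta||\le\log n$ once $n\ge 3$. Integrating against $\mu$ over a set of mass at most $C_0$ then yields $|h_n(z)|\le C_0\log n+M_0$ for all $|z|<1$.

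The main (and essentially only) obstacle is the required bound $|h_n|<n$, and the estimate above is exactly what resolves it: the near-boundary part of the mass contributes at worst a term of size $\log n$, coming from the distance $\sim\frac1n$ between a point of $\{|z|<1\}$ and a point of $\{|\zeta|\ge 1+\frac1n\}$, while the harmonic piece $H$ contributes only a fixed constant $M_0$. Since $C_0\log n+M_0<n$ for all large $n$, the bound $|h_n|<n$ holds eventually. Combining this with the mass condition of the second paragraph shows $\varphi\in S_n$ for all sufficiently large $n$.
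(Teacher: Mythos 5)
Your proposal is correct and follows essentially the same route as the paper: continuity from above of the finite measure $\mu$ gives the mass condition, and the bound $|h_n|<n$ comes from estimating a logarithmic potential whose mass sits at distance at least $\frac1n$ from the unit disc, yielding a bound of order $\log n$ plus a constant. The paper phrases the second step as comparing $\psi_n$ with a fixed $\psi_m$ rather than writing out the full Riesz decomposition on $\{|z|<2\}$, but the underlying estimate is identical.
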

\begin{proof}
Pick $m$ so that the mass of $\mu = \frac{1}{2\pi}\Delta \varphi$ is strictly less than 2 on the disc $\Delta\left(1+\frac1m\right)$. Increasing $m$, we may assume 
\begin{align*}
\mu\left(\vert z\vert< 1+\frac 1m\right)< 2-\frac1m. 
\end{align*}
This remains true for all large $m$, let $\varphi$ be subharmonic on $\C$ satisfying condition $(A)$, write $\varphi = \psi_m + h_m$ and set 
$$
K= \sup\limits_{|z|\leq 1} |h_m (z)|.
$$ 
For $n>m$, we may also write $\varphi = \psi_n + h_n$. We have that 
$$
|\psi _n - \psi _m | \leq 2 \log n \ \ \text{on} \ \ \ |z|<1.
$$
Hence 
 $$
 |h_n| \leq K+ 2 \log n  \ \  \text{on} \ \ \ |z|<1.
 $$
We may choose $n$ so large that 
$$
K+ 2 \log n  \leq n.
$$
\end{proof}
Now we begin to construct Example \ref{moonshapeddomainarbitrarily}. By Lemma \ref{Sn}, we may assume $\varphi \in S_1$. Then, 
$\varphi = \varphi_1  + h_1$ where 
$\varphi_1 = \int_{|\zeta|<2}\log|z-\zeta| d \mu $
and $\mu(|z|<2)< 1$. 
Let $D_1$ be the domain composed of points satisfying the inequalities
\begin{equation*}
|z| < 1,  \ \ \ \      \left | z- \frac{1}{2^2} \right | > \frac34, \ \ \ \ \   \frac {\pi}{ 2^2} < \arg z < 2\pi -\frac {\pi}{ 2^2}.
\end{equation*}
Then $D_1$ is a bounded simply connected domain so that $\mathbb{C} \setminus \overline{D}_1$ is connected.
By Corollary \ref{estimatefor phi} we know that $\int_{D_1}e^{-\varphi_1}d \lambda$ is uniformly bounded. Since $|h_1|<1$, we also get that $\int_{D_1}e^{-\varphi}d \lambda$ is uniformly bounded for all $\varphi \in S_1$. By the Runge theorem there exists a polynomial $P_{n_1}(z)$ with degree $n_1$ such that 
\begin{equation*}
\int_{D_1} \left|  \frac{1}{\sqrt{z}}  - P_{n_1}(z) \right|^2 e^{-\varphi(z)}d\lambda < \frac{1}{2^2} 
\end{equation*}
for all $\varphi \in S_1$.
Choose $0 <\alpha_1< \frac{1}{2^2}$ is sufficiently small so that for the above polynomial $P_{n_1}(z)$ satisfying
\begin{align*}
\int_{\Delta_1} \left|  \frac{1}{\sqrt{z}}  - P_{n_1}(z) \right|^2  e^{-\varphi(z)} d\lambda & \leq  \sup_{\Delta_1} \left| \frac{1}{\sqrt{z}}  - P_{n_1}(z)  \right |^2 \int_{\Delta_1}  e^{-\varphi_1-h_1}d\lambda\\
&\leq  \sup_{\Delta_1} \left| \frac{1}{\sqrt{z}}  - P_{n_1}(z)  \right |^2 \frac{R_1^{2-\alpha}}{2-\alpha}\cdot e\\
& < \frac {1}{ 2^2},
\end{align*}
where $\Delta_1$, of area $\pi R_1^2$ being the points satisfying the inequalities
\begin{equation}
|z| < 1,  \ \ \ \       | z-\alpha_1| > 1- \alpha_1, \ \ \ \ \   | \arg z | \leq \frac {\pi}{ 2^2}.
\end{equation}  
Let $\widetilde{D_2}$ be the domain composed of the points verifying the inequalities
\begin{equation*}
|z| < 1,  \ \ \ \        | z- \alpha_1| > 1-\alpha_1,  \ \ \ \ \   \frac {\pi}{ 2^3} < \arg z < 2\pi -\frac {\pi}{ 2^3}.
\end{equation*}
Set $D_2 = D_1 \cup \widetilde{D_2}$,  this is a Jordan domain. We now consider any $\varphi \in S_2$. Then there exists a polynomial $P_{n_2}(z)$ with degree $n_2$ such that 
\begin{equation*}
\int_{D_2} \left|  \frac{1}{\sqrt{z}}  - P_{n_2}(z) \right|^2 e^{-\varphi(z)}d\lambda < \frac{1}{2^3}. 
\end{equation*}
Choose $0<\alpha_2 <\alpha_1$ so that for the above polynomial $P_{n_2}(z)$ satisfying
\begin{align*}
\int_{\Delta_2} \left|  \frac{1}{\sqrt{z}}  - P_{n_2}(z) \right|^2  e^{-\varphi(z)} d\lambda & \leq  \sup_{\Delta_2} \left| \frac{1}{\sqrt{z}}  - P_{n_2}(z)  \right |^2 \int_{\Delta_2}  e^{-\varphi}d\lambda\\
&\leq  \sup_{\Delta_2} \left| \frac{1}{\sqrt{z}}  - P_{n_2}(z)  \right |^2 \frac{R_2^{2-\alpha}}{2-\alpha}\cdot e^2\\
& < \frac {1}{ 2^3},
\end{align*} 
where $\Delta_2$, of area $\pi R_2^2$ being the points satisfying the inequalities
\begin{equation*}
|z| < 1,  \ \ \ \       | z-\alpha_2| >1- \alpha_2, \ \ \ \ \   | \arg z |\leq \frac {\pi}{ 2^3}.
\end{equation*}
Let $\widetilde{D_3} := \{|z| < 1,  \ \      | z-\alpha_2| >1- \alpha_2,  \ \ \ \ \   \frac {\pi}{ 2^4} < \arg z < 2\pi -\frac {\pi}{ 2^4}\}.$
Set $D_3 = D_2 \cup \widetilde{D_3}$. Proceed as above, we can find a sequence of bounded simply connected domains $D_1, D_2,  \cdots, D_n, \cdots.$ Let $D$ be the limit domain of $D_n$. Then the domain $D$ is bounded by the circle $|z|=1$ and a simple Jordan curve $\Gamma$ tangent to $|z|=1$ in $z=1$. Thus $D$ is a bounded very thin moon-shaped domain and we know that the limit domain $D$ is contained in the limit domain $D_k \cup \Delta_k$, from which it follows that for all $\varphi \in S_k$,
\begin{equation*}
\int_D \left|  \frac{1}{\sqrt{z}}  - P_{n_k}(z) \right|^2 e^{-\varphi_1(z)}d\lambda < \frac{1}{2^{k+1}} +  \frac{1}{2^{k+1}}  = \frac{1}{2^k}.
\end{equation*}
Then polynomials are dense in $H^2(D, e^{-\varphi})$ for any $\varphi$ satisfying condition $(A)$.

\section{Example \ref{moonshapeddomainarbitary} }

First, we prove the following Lemma, already known for Bergman spaces ($\varphi\equiv 0$).

\begin{lemma}\label{bon}
Let $\Omega\Subset \mathbb{C}$ and $\varphi$ a subharmonic function on $\Omega$. Assume that $1\in H^2(\Omega, e^{-\varphi})$ and $p\in\Omega$.
For each $n$, let $f_n\in H^2(\Omega,e^{-\varphi})$ be a function $f_n=a_n(z-p)^n+O((z-p))^{n+1}$, $a_n>0$ maximal with $\Vert f_n\Vert_{\Omega,\varphi}=1$. Then $\lbrace{f_n}\rbrace_{n=0}^{\infty}$ is an orthonormal basis for $H^2(\Omega,e^{-\varphi})$.
\end{lemma}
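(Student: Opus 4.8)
The plan is to realize $\{f_n\}$ as the orthonormal system adapted to the filtration of $H^2(\Omega,e^{-\varphi})$ by order of vanishing at $p$, and to read off completeness from the identity theorem. Throughout I write $\langle f,g\rangle_{\Omega,\varphi}=\int_\Omega f\overline{g}\,e^{-\varphi}\,d\lambda$ for the inner product inducing $\|\cdot\|_{\Omega,\varphi}$, and $c_j(f)=f^{(j)}(p)/j!$ for the $j$-th Taylor coefficient of $f$ at $p$. The one genuinely analytic input I would record first is that each coefficient functional $c_j$ is bounded on $H^2(\Omega,e^{-\varphi})$. Since $\varphi$ is subharmonic it is upper semicontinuous, hence bounded above by some $M$ on a closed disc $\{|z-p|\le r\}\subset\Omega$; therefore $e^{-\varphi}\ge e^{-M}$ there and
\[
\int_{|z-p|<r}|f|^2\,d\lambda\le e^{M}\,\|f\|_{\Omega,\varphi}^2 .
\]
Expanding a holomorphic $f=\sum_k c_k(f)(z-p)^k$ and integrating over the disc gives $\int_{|z-p|<r}|f|^2\,d\lambda=\pi\sum_k |c_k(f)|^2 r^{2k+2}/(k+1)$, so $|c_j(f)|\le\bigl(e^{M}(j+1)/(\pi r^{2j+2})\bigr)^{1/2}\|f\|_{\Omega,\varphi}$. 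The same disc estimate shows that weighted-$L^2$ convergence forces local uniform convergence of the holomorphic representatives, so $H^2(\Omega,e^{-\varphi})$ is complete and hence a Hilbert space; and because $\Omega$ is bounded and $1\in H^2(\Omega,e^{-\varphi})$, every polynomial lies in $H^2(\Omega,e^{-\varphi})$.

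Next I would construct the $f_n$ by Riesz representation. Set $V_n=\{f\in H^2(\Omega,e^{-\varphi}):c_0(f)=\cdots=c_{n-1}(f)=0\}$, a closed subspace by the boundedness of the $c_j$. Restricting the bounded functional $c_n$ to $V_n$ yields a unique $g_n\in V_n$ with $c_n(f)=\langle f,g_n\rangle_{\Omega,\varphi}$ for all $f\in V_n$, and $g_n\neq 0$ since $(z-p)^n\in V_n$ has $c_n=1$. For a unit vector $f\in V_n$ the Cauchy--Schwarz inequality gives $|c_n(f)|=|\langle f,g_n\rangle_{\Omega,\varphi}|\le\|g_n\|_{\Omega,\varphi}$, with equality and positive real value exactly for $f=g_n/\|g_n\|_{\Omega,\varphi}$. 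Hence the extremal function exists, is unique, and equals $f_n=g_n/\|g_n\|_{\Omega,\varphi}$ with $a_n=\|g_n\|_{\Omega,\varphi}>0$.

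Orthonormality then comes out immediately: clearly $\|f_n\|_{\Omega,\varphi}=1$, and for $m>n$ the function $f_m\in V_m\subseteq V_n$ satisfies $c_n(f_m)=0$, so $\langle f_m,f_n\rangle_{\Omega,\varphi}=a_n^{-1}\langle f_m,g_n\rangle_{\Omega,\varphi}=a_n^{-1}c_n(f_m)=0$, and conjugating handles $m<n$. For completeness I would identify $W_n:=\mathrm{span}\{f_0,\dots,f_{n-1}\}$ with $V_n^{\perp}$: the same computation shows each $f_j$ with $j<n$ is orthogonal to $V_n$, so $W_n\subseteq V_n^{\perp}$; on the other hand the map $f\mapsto(c_0(f),\dots,c_{n-1}(f))$ is surjective onto $\mathbb{C}^n$ (already the polynomials $1,(z-p),\dots,(z-p)^{n-1}$ hit the standard basis), so $V_n$ has codimension $n$ and $\dim V_n^{\perp}=n=\dim W_n$, forcing $W_n=V_n^{\perp}$ and hence $W_n^{\perp}=V_n$. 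Consequently a function $f$ orthogonal to every $f_n$ lies in $\bigcap_n V_n$, i.e. all its Taylor coefficients at $p$ vanish, whence $f\equiv 0$ by the identity theorem on the connected domain $\Omega$. Therefore $\{f_n\}_{n\ge 0}$ is an orthonormal basis.

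The routine parts are the Riesz/extremal construction and the orthogonality bookkeeping. The only point needing care is the analytic input of the first step: subharmonicity of $\varphi$ provides a positive lower bound for $e^{-\varphi}$ on compacta, and this single fact simultaneously yields boundedness of the coefficient functionals, completeness of $H^2(\Omega,e^{-\varphi})$, and membership of all polynomials; everything else is formal Hilbert-space linear algebra.
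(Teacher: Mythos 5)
Your proof is correct, and it is actually more complete than the one in the paper, though the underlying idea is the same: $f_n$ solves an extremal problem and is therefore orthogonal to every function vanishing to order $n+1$ at $p$, and completeness follows because a function orthogonal to all the $f_n$ has vanishing Taylor expansion at $p$. The implementations differ. The paper obtains the orthogonality by a variational perturbation (for a suitably chosen small $\varepsilon$ one gets $\|f_n+\varepsilon g\|^2<1$, contradicting maximality of $a_n$), takes the existence of the extremal functions for granted, and proves completeness by normalizing the lowest-order nonzero Taylor coefficient of a putative $g\perp\{f_n\}$ so that $g-f_n=O((z-p)^{n+1})$, which forces $\langle g,f_n\rangle=\|f_n\|^2=1$, a contradiction. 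You instead build $f_n$ by Riesz representation of the coefficient functional $c_n$ on the closed subspace $V_n$, which buys you existence and uniqueness of the extremals for free, and you deduce completeness from the identification $\mathrm{span}\{f_0,\dots,f_{n-1}\}=V_n^{\perp}$ via a codimension count together with the identity theorem. Your preliminary step --- the bound $|c_j(f)|\le C_j\|f\|_{\Omega,\varphi}$ coming from the local lower bound on $e^{-\varphi}$ on compacta --- is precisely the analytic input the paper relies on silently (without it $V_n$ need not be closed and the extremal problem need not admit a solution), so supplying it is a genuine strengthening rather than a detour.
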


\begin{proof}
We show that $f_n\perp \lbrace{g\in H^2(\Omega,e^{-\varphi});\; g=O((z-p)^{n+1})}\rbrace$.\\
By contradiction, suppose that there exists such $g\in H^2(\Omega,e^{-\varphi})$ that is not orthogonal to $f_n$. Then for complex-valued $\varepsilon$ small enough, 
\begin{align}
\langle{f_n + \varepsilon g, f_n +\varepsilon g}\rangle & = 1- 2\Re e (\varepsilon\langle{f_n, g}\rangle) + \vert\varepsilon\vert^2\langle{ g, g}\rangle,\\
&= 1- 2\vert \varepsilon\vert \vert \langle{f_n, g}\rangle\vert + \vert\varepsilon g\vert^2,\\
&= t<1.
\end{align}
Hence, $\widetilde{f_n}=\dfrac{f_n +\varepsilon g}{\sqrt{t}}= \dfrac{a_n}{\sqrt{t}}(z-p)^n + O_t((z-p))^{n+1})$ contradicts the maximality of the coefficients.\\

Suppose now that $g\perp \lbrace{f_n}\rbrace$ such that $g=b_n(z-p)^n + O((z-p))^{n+1}$ with $b_n=a_n$.
We get $g-f_n=O((z-p))^{n+1}$ that is orthogonal to $f_n$ i.e 
\begin{align*}
\langle{g-f_n, f_n}\rangle = \langle{g, f_n}\rangle -  \langle{f_n, f_n}\rangle = 0.
\end{align*}
but
$\langle{g, f_n}\rangle =0 \neq 1=\langle{f_n,f_n}\rangle$. 
\end{proof}
Let $M$ be a moon-shaped domain with multiple boundary point $Q$, and $p$ be an interior point of $M$.
For each $n\geq 2$, denote $M_n=M\setminus B(Q,1/n)$ where $B(Q,1/n)$ is the disc of center $Q$ and radius $1/n$. 
Let $\varphi$ be a non-negative subharmonic function on $\mathbb{C}$ and $f\in H^2(M_n,e^{-\varphi})$. By Corollary \ref{coro:Jordandense}, for every $\varepsilon>0$, there exists a polynomial $P$ such that $\Vert f -P\Vert_{M_n, \varphi}<\varepsilon$. 

\begin{lemma}\label{lemma: approxcplmt}
Under the previous assumptions, there exists a subharmonic function $\widetilde{\varphi}$ on $\mathbb{C}$ such that $\widetilde{\varphi}=\varphi$ on $M_n$ and 
$$\Vert P\Vert_{B(Q,1/n)\cap M, \widetilde{\varphi}}<\varepsilon.$$
\end{lemma}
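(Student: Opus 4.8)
The plan is to leave $\varphi$ untouched on $M_n$ and to add to it a large multiple of a subharmonic ``barrier'' that vanishes on $M_n$ but is strictly positive on the excised cusp piece $E:=B(Q,1/n)\cap M$, letting the multiple tend to infinity. The point is that $P$ is now a \emph{fixed} polynomial, hence bounded on the compact set $\overline E$, say $|P|^2\le A_P$ there; and since $\varphi\ge0$ we have $e^{-\varphi}\le 1$. Thus for any admissible $\wt\varphi$ with $\wt\varphi\ge\varphi$ on $E$,
\[
\|P\|_{E,\wt\varphi}^2=\int_{E}|P|^2e^{-\wt\varphi}\,d\lambda\le A_P\int_{E}e^{-(\wt\varphi-\varphi)}\,d\lambda .
\]
So it suffices to build a subharmonic $\wt\varphi$ on $\mathbb{C}$ with $\wt\varphi=\varphi$ on $M_n$ and $\wt\varphi-\varphi\to+\infty$ pointwise on $E$.

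The key geometric observation is that, having removed a \emph{full} disc about the unique multiple boundary point $Q$, the truncated domain $M_n$ is a genuine Jordan domain; therefore $U:=\mathbb{C}\setminus\overline{M_n}$ is connected and, crucially, it \emph{contains} the cusp piece $E$, since every interior point of $E$ satisfies $|z-Q|<1/n$ and so cannot lie in $\overline{M_n}$. I would then take $s:=g_U(\cdot,\infty)$, the Green's function of $U$ with pole at $\infty$, extended by $0$ across $\partial M_n$. This is the classical zero-extended Green's function: it is subharmonic on all of $\mathbb{C}$, nonnegative, harmonic on $U\setminus\{\infty\}$ (with the usual $\log|z|$ growth at $\infty$), with $s\equiv 0$ on $\overline{M_n}$ and $s>0$ on $U\supset E$. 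Putting $\wt\varphi:=\varphi+C\,s$ for a constant $C>0$ then gives a subharmonic function on $\mathbb{C}$ (a sum of two subharmonic functions) with $\wt\varphi\equiv\varphi$ on $M_n$, because $s\equiv 0$ there.

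With this choice $\wt\varphi-\varphi=Cs$, and the displayed bound becomes $\|P\|_{E,\wt\varphi}^2\le A_P\int_E e^{-Cs}\,d\lambda$. Since $s>0$ at every point of the open set $E$, the integrands $e^{-Cs}$ decrease to $0$ pointwise on $E$ as $C\to\infty$ and are dominated by $1$, which is integrable over the bounded set $E$; by dominated convergence $\int_E e^{-Cs}\,d\lambda\to 0$. Choosing $C$ so large that $A_P\int_E e^{-Cs}\,d\lambda<\varepsilon^2$ yields $\|P\|_{B(Q,1/n)\cap M,\wt\varphi}<\varepsilon$, as required.

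The step I expect to be the crux is the construction of $s$. One must (i) recognise that excising $B(Q,1/n)$ turns $M_n$ into a Jordan domain whose exterior $U$ is connected and swallows the whole cusp region $E$ — this is precisely what allows a function small on $M_n$ to be large on $E$ without violating the maximum principle, the ``peak'' being pushed out to $\infty$ rather than forced onto the interior point $Q$ — and (ii) check that the exterior Green's function, extended by zero, is globally subharmonic and strictly positive on $E$. Both are standard facts of potential theory, readily obtained from the Riesz decomposition / equilibrium-potential description already used in the text; once they are in hand, the remainder is the one-line dominated-convergence estimate above.
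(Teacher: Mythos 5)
Your proof is correct, and it follows the same overall strategy as the paper: set $\widetilde{\varphi}=\varphi+C\gamma$ for a subharmonic barrier $\gamma$ that vanishes on $M_n$ and is strictly positive on $E=B(Q,1/n)\cap M$, then let $C\to\infty$ and conclude by dominated convergence. The only genuine difference is how the barrier is built. The paper exploits polynomial convexity of $\overline{M}_n$: for each $q$ outside $\overline{M}_n$ it picks a polynomial $P_q$ with $|P_q|<1$ on $\overline{M}_n$ and $|P_q(q)|>1$, composes with a convex increasing $\chi$ vanishing on $(-\infty,1]$, and sums $\sum_m \varepsilon_m\,\chi\circ|P_{q_m}|^2$; this is manifestly subharmonic and vanishes identically on $\overline{M}_n$ with no regularity hypotheses, at the cost of having to choose the $q_m$ and $\varepsilon_m$ so the sum converges and is positive on all of $E$. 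You instead take the equilibrium potential $s=V_K-U_{\mu_K}$ (equivalently the Green's function of $\mathbb{C}\setminus\overline{M}_n$ with pole at $\infty$, extended by zero), which is globally subharmonic, strictly positive off the polynomial hull, and zero on the open set $M_n$ even without boundary regularity (it is harmonic and nonnegative there and vanishes quasi-everywhere, hence identically); this is cleaner and matches the Riesz/potential-theoretic machinery already used in Section 4. The one point you rightly flag as the crux, and which both proofs need, is that $E$ lies in the \emph{unbounded} component of $\mathbb{C}\setminus\overline{M}_n$ --- if $E$ sat in a bounded complementary component, both barriers would vanish there by the maximum principle --- and your observation that the whole open disc $B(Q,1/n)$ is disjoint from $\overline{M}_n$ and connects past $Q$ to the exterior of $M$ settles this.
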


\begin{proof}
Because $\overline{M}_n$ is polynomially convex, there exists a subharmonic function $\gamma$ on $\mathbb{C}$ such that $\gamma =0$ on $M_n$ and $\gamma>0$ outside: To find $\gamma$, note that for every point $q$ outside $\overline{M}_n$ there exists a polynomial $P_q$  such that $|P_q(q)|>1$ and $|P_q|<1$ on $\overline{M}_n.$ We choose a convex function $\chi(x)$ which vanishes when $x\leq 1$ and is strictly positive
when $x>1.$ Then $\chi\circ |P_q|^2$ is subharmonic and vanishes on $M_n$ while
it is strictly positive in a neighborhood of $q.$ Then one can  define $\gamma=\sum_m \varepsilon_m
\chi\circ |P_{q_m}|^2$ for suitable choices. By choosing $\widetilde{\varphi}=\varphi + L\gamma$ for $L$ large enough, we get 
$$\Vert P\Vert_{B(Q,1/n)\cap M,\widetilde{\varphi}}\leq \Vert P\Vert_{B(Q,1/n)\cap M,L\gamma}.$$
By taking the limit as $L$ tends to $+\infty$, the second term of the previous estimate tends to $0$.
\end{proof}

By Lemma \ref{lemma: approxcplmt}, we can construct inductively an increasing sequence of non-negative subharmonic functions $\varphi_n$ on $M$ and by Lemma \ref{bon}, we can find $f_0^n,\dots,f_n^n\in H^2(M_n,e^{-\varphi_n})$ such that $f_j^n=a_j^n(z-p)^{j} + O((z-p)^{j+1})$ with $a_j^n>0$ maximal and $\Vert f_j^n\Vert_{M_n,\varphi_n}=1$.
Then, there exist polynomials $P_j^n$ and $\varphi_{n+1}$ large enough by Lemma \ref{lemma: approxcplmt} such that 
\begin{equation}\label{fj-Pj: Mn}
\Vert f_j^n-P_j^n\Vert_{M_n,\varphi_n}<\dfrac{1}{n}
\end{equation}
and
\begin{equation}\label{P_j:cplmt}
\Vert P_j^n\Vert_{B(Q,1/n)\cap M,\varphi_{n+1}}<\dfrac{1}{n}
\end{equation}
Moreover, by Remark \ref{rmk:Taylor}, we can choose 
$$P_j^n(z) = a_j^n(z-p)^j+O((z-p)^{j+1}).$$
Let $\varphi:=\lim_{n\to+\infty} \varphi_n$. 
Hence $\Vert f_j^n\Vert_{M_n,\varphi}=1$ and \eqref{fj-Pj: Mn} and \eqref{P_j:cplmt} hold with respect to $\varphi$.\\

\begin{lemma}\label{lemma: Pjnorm}
Under the previous assumptions, the polynomials $P_j^n$, $j=0,\dots, n$ built inductively as previously verify the following property
$$\Vert P_j^n\Vert_{M,\varphi}\leq 1+\dfrac{2}{n}.$$
\end{lemma}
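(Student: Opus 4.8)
The plan is to decompose the moon-shaped domain as the disjoint union (up to the measure-zero circle $\partial B(Q,1/n)$)
$$
M = M_n \cup \big(B(Q,1/n)\cap M\big),
$$
so that the defining integral of the squared norm splits additively:
$$
\Vert P_j^n\Vert_{M,\varphi}^2 = \Vert P_j^n\Vert_{M_n,\varphi}^2 + \Vert P_j^n\Vert_{B(Q,1/n)\cap M,\varphi}^2 .
$$
I would then bound the two pieces separately and recombine them with the elementary inequality $\sqrt{a^2+b^2}\le a+b$ valid for $a,b\ge 0$. The inputs are exactly the estimates $\Vert f_j^n\Vert_{M_n,\varphi}=1$ together with \eqref{fj-Pj: Mn} and \eqref{P_j:cplmt}, all of which the discussion preceding the lemma has already arranged to hold with respect to the limit weight $\varphi=\lim_m\varphi_m$.

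For the $M_n$-piece the triangle inequality and $\Vert f_j^n\Vert_{M_n,\varphi}=1$ give
$$
\Vert P_j^n\Vert_{M_n,\varphi}\le \Vert P_j^n-f_j^n\Vert_{M_n,\varphi}+\Vert f_j^n\Vert_{M_n,\varphi}<\tfrac1n+1 ,
$$
using \eqref{fj-Pj: Mn}. For the ball-piece, \eqref{P_j:cplmt} directly yields $\Vert P_j^n\Vert_{B(Q,1/n)\cap M,\varphi}<1/n$. Substituting both bounds into the additive splitting and applying $\sqrt{a^2+b^2}\le a+b$ produces
$$
\Vert P_j^n\Vert_{M,\varphi}\le\Big(1+\tfrac1n\Big)+\tfrac1n = 1+\tfrac2n ,
$$
uniformly over $0\le j\le n$, which is the asserted estimate.

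The computation itself is pure bookkeeping, so there is no serious analytic obstacle; the one point I would be careful to invoke correctly is the transfer of \eqref{fj-Pj: Mn} and \eqref{P_j:cplmt} from the intermediate weights $\varphi_n,\varphi_{n+1}$ to the limit $\varphi$. This rests on two features of the inductive construction through Lemma \ref{lemma: approxcplmt}: every later increment $\varphi_{m+1}-\varphi_m$ ($m\ge n$) is supported outside $M_m\supset M_n$, so $\varphi=\varphi_n$ on $M_n$ and the $M_n$-estimates are literally unchanged; and $(\varphi_m)$ is increasing, so $\varphi\ge\varphi_{n+1}$ globally, whence $e^{-\varphi}\le e^{-\varphi_{n+1}}$ and the ball-piece estimate only improves. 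With these monotonicity facts in hand the splitting argument closes immediately.
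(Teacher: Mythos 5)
Your argument is correct and follows essentially the same route as the paper: split the norm over $M=M_n\cup(B(Q,1/n)\cap M)$, apply the triangle inequality with $f_j^n$ on the $M_n$-piece using \eqref{fj-Pj: Mn}, bound the ball-piece by \eqref{P_j:cplmt}, and transfer from the intermediate weights to $\varphi$ via monotonicity of the sequence $(\varphi_m)$. The only cosmetic difference is that you make the $\sqrt{a^2+b^2}\le a+b$ step explicit where the paper simply invokes subadditivity of the norm over a partition of the domain.
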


\begin{proof}
Combining the properties of $f_j^n$ seen in Lemma \ref{bon} and Lemma \ref{lemma: approxcplmt}
\begin{align*}
\Vert P_j^n\Vert_{M,\varphi}& \leq \Vert P_j^n\Vert_{M_n,\varphi} + \Vert P_j^n\Vert_{B(Q,1/n)\cap M,\varphi},\\
&\leq \Vert P_j^n\Vert_{M_n,\varphi_n} + \Vert P_j^n\Vert_{B(Q,1/n)\cap M,\varphi_{n+1}},\\
&\leq \Vert P_j^n-f_j^n\Vert_{M_n,\varphi_n} + \Vert f_j^n\Vert_{M_n,\varphi_n} + \dfrac{1}{n},\\
&\leq 1 + \dfrac{2}{n}.
\end{align*}
\end{proof}

We are now able to give the details of the construction of Example \ref{moonshapeddomainarbitary}. 

By Lemma \ref{lemma: Pjnorm}, $P_j^n$ converges weakly to $P_j$ in $H^2(M,e^{-\varphi})$ such that 
$$\Vert P_j\Vert_{M,\varphi}\leq 1$$
and 
$$P_j=(\lim_{n\to +\infty} a_j^n)(z-p)^j+ O((z-p)^{j+1}).
$$
In particular, the limit $\lim_{n\to +\infty} a_j^n$ gives optimal coefficients for $P_j$. It follows that $\Vert P_j\Vert_{M,\varphi}=1$ and $P_j$ is an orthonormal basis for $H^2(M,e^{-\varphi})$ by Lemma \ref{bon}.

Let $f\in H^2(M,e^{-\varphi})$ and $\varepsilon>0$. We can express $f$ as $\displaystyle{f=\sum_{j=0}^{\infty} A_jP_j}$.

For $N$ large enough, 
\begin{align*}
\Vert \sum_{j=0}^N A_jP_j - f\Vert_{M,\varphi}<\dfrac{\varepsilon}{2}.
\end{align*}
For $n$ large enough, we get finally
\begin{align*}
\Vert \sum_{j=0}^N A_jP_j^n - f\Vert_{M,\varphi}<{\varepsilon}.
\end{align*}

\section{Proof of Theorem \ref{moonshapeddomainnot}}

\begin{proof} [Proof of Theorem \ref{moonshapeddomainnot}]
The proof relies on \cite{Gaier80}, Chapter I, section 3. We reason by contradiction.\\ 
\indent Let $\Gamma$ be a circle as in Figure 1. Then there exists a positive constant $C$ such that
$d_{\partial \Omega}(z)\geq C|z-1|^2$ for all $z\in \Gamma.$ 
Choose a function $f\in {H}^2(\Omega, e^{-\varphi})$ which does not extend holomorphically to the inside of the small circle. (One can first choose any nontrivial holomorphic function in the ${H}^2(\Omega, e^{-\varphi})$.
Pick a suitable point $p$ inside the inner circle and observe that $\frac{f}{z-p}$ is still in the $H^2$ space.)\\
\indent Assume that there exists a sequence of polynomials $P_n$ so that $\Vert f-P_n\Vert_{\Omega,\varphi}$ tends to $0$ as $n$ tends to $+\infty$. We obtain a contradiction by showing that $f$ extends analytically in the interior of $\Gamma$ so in the hole (see Figure 1), which is impossible.\\
\begin{figure}
\begin{center}
\begin{tikzpicture}
\fill[pattern=north west lines] (0, 0) circle (2);
\fill[white] (1.3,0) circle (0.7);
\draw[->] (-3,0)-- (3,0);
\draw (3,0) node [right] {$x$};
\draw[->] (0,-3)-- (0,3);
\draw (0,3) node [left] {$y$};
\draw (-0.3,-0.3) node {$0$};
\draw (2,0.3) node [right] {$z=1$};
\draw (-1.5, -1.5) node [left] {$G$};
\draw (2,0) node {$\bullet$} ;
\draw (2.2,0) node [below] {$Q$} ;
\draw[red] (0.5, 0) circle (1.5);
\draw (-1, 1) [left] node {$\color{red}{\Gamma}$};
\end{tikzpicture}
\end{center}
\caption{A general moon-shaped domain including the curve $\Gamma$ passing through the multiple boundary point $Q$.}
\end{figure}
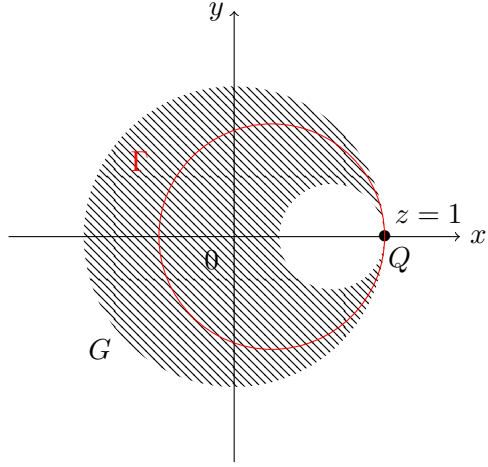
For each $w\in \Gamma\setminus \lbrace{Q}\rbrace$, by the mean value property for subharmonic functions, we get for any $n,m\in\mathbb{N}$, 
\begin{align}
\vert P_n(w)-P_m(w)\vert^2&\leq \dfrac{1}{\pi d^2_{\partial\Omega}(w)}\int_{B(w, d_{\partial\Omega}(w))} \vert P_n(z)-P_m(z)\vert^2 d\lambda,\notag\\
&\leq \dfrac{\widetilde{C}}{\pi d^2_{\partial\Omega}(w)}\int_{B(w, d_{\partial\Omega}(w))} \vert P_n(z)-P_m(z)\vert^2 e^{-\varphi(z)} d\lambda,\label{Cphi}\\
&\leq \dfrac{\widetilde{C}}{\pi d^2_{\partial\Omega}(w)}\Vert P_n(z)-P_m(z)\Vert^2_{\Omega,\varphi},
\label{eqPnPm}
\end{align}
where we have used in \eqref{Cphi} the fact that $\varphi$ is bounded above in $\Omega$ with $\widetilde{C}$ a positive constant.
Because $d_{\partial\Omega}(z)\geq C\vert z-1\vert^2$ by assumption, we obtain from \eqref{eqPnPm} and for each $w\in \Gamma\setminus \lbrace{Q}\rbrace$,
\begin{align}
\vert (w-1)^2(P_n(w)-P_m(w))\vert &\leq \dfrac{d_{\partial\Omega}(w)}{C}\dfrac{\sqrt{\widetilde{C}}}{\sqrt{\pi} d_{\partial\Omega}(w)}\Vert P_n(z)-P_m(z)\Vert_{\Omega,\varphi},\notag\\
&\leq C'\Vert P_n(z)-P_m(z)\Vert_{\Omega,\varphi}\label{powerp}
\end{align}
where $C'>0$. The inequality \eqref{powerp} holds for each point $w\in \Gamma$ so in the interior of $\Gamma$. Hence, the sequence $((z-1)^2P_n)$ converges uniformly in the interior of $\Gamma$ to $(z-1)^2 f$. 

\end{proof}

\textbf{Acknowledgements} The first author was supported by Rannis-grant 152572-051. The second author and the third author were supported in part by the Norwegian Research Council grant number 240569, the third author was also supported by NSFC grant 11601120.  The authors give thanks to Dr. Zhonghua Wang for his valuable comments in the proof of theorem \ref{counterexinC}
and the referees for their valuable suggestions.

\end{document}